\documentclass[12pt]{amsart}

\usepackage{lscape,amssymb, amsmath,verbatim,graphicx}
\usepackage{epsfig,float}
\usepackage{graphicx}
\usepackage{epsfig}
\newcommand{\lf}{\lfloor}
\newcommand{\rf}{\rfloor}
\newcommand{\beps}{{\mbox{\boldmath $\epsilon$}}}

\newcommand{\bet}{{\mbox{\boldmath $\eta$}}}
\newcommand{\bX}{{\bf X}}
\newcommand{\one}{\chi^{(1)}}
\newcommand{\two}{\chi^{(2)}}
\newcommand{\sa}{\begin{align*}
}
\newcommand{\se}{\end{align*}
}
\numberwithin{table}{section}
\numberwithin{figure}{section}
\newcommand{\bY}{{\bf Y}}

\newcommand{\intt}{\int\hspace{-.2cm}\int}

\topmargin-1.5cm
\textwidth15.8cm
\textheight23cm
\oddsidemargin-0.3cm%
\evensidemargin.5cm
\thispagestyle{empty}%
\parindent0cm%
\newtheorem{theorem}{Theorem}[section]
\newtheorem{lemma}{Lemma}[section]
\newtheorem{assumption}{Assumption}[section]

\newcommand\Y{{\bf Y}}
\newcommand\X{{\bf X}}

\theoremstyle{definition}
\newtheorem{defi}{Definition}[section]

\allowdisplaybreaks
\def\beq{\begin{equation}}
\def\eeq{\end{equation}}

\numberwithin{equation}{section}
\numberwithin{theorem}{section}

\begin{document}
\title[Testing for independence]{ Testing for independence between   functional time series}

\author {Lajos Horv\'ath}

\address{Lajos Horv\'ath, Department of Mathematics, University of Utah, Salt Lake City, UT 84112--0090 USA
}

\author{Gregory Rice}
\address{Gregory Rice, Department of Mathematics, University of Utah, Salt Lake City, UT 84112--0090 USA
}


\keywords{functional observations, tests for independence, weak dependence,
long run covariance function, central limit theorem }

\thanks{{\it JEL Classification} C12, C32\\
Research supported by NSF grant DMS  1305858\\
}

\begin{abstract}
Frequently econometricians are interested in verifying a
relationship between two or more time series. Such analysis is typically
carried out by  causality and/or independence tests which have been well
studied when the data is univariate or multivariate. Modern data though is
increasingly of a high dimensional or functional nature for which finite dimensional methods are  not suitable. In
the present paper we develop methodology to check the assumption that data
obtained from two functional time series are independent. Our procedure is
based on the norms of empirical cross covariance operators and is
asymptotically validated when the underlying populations are assumed to be
in a class of weakly dependent random functions which include the
functional ARMA, ARCH and GARCH processes.

\end{abstract}

\maketitle

\section{Introduction and results}\label{first}
A common goal of data analysis in econometrics is to determine whether or not a relationship exists  between two variables which
are measured over time. A determination in either way may be useful. On one hand, if a relationship is confirmed to exist between two variables
then further investigation into the strength and nature of the
relationship may lead to interesting insights or effective predictive models.  Conversely if there is no
relationship between the two variables then an entire toolbox of
statistical techniques developed to analyze two samples which are
independent may be used. The problem of testing for correlation between
two univariate or multivariate time series has been well treated, and we
discuss the relevant literature below. However, as a by product of
seemingly insatiable modern data storage technology, many data of interest
exhibit such large dimension that traditional multivariate techniques are not
suitable. For example, tick by tick stock return data is stored hundreds
of times per second, leading to thousands of observations during a single
day. In such cases a pragmatic approach is to treat the data as densely
observed measurements from an underlying curve, and, after using the
measurements to approximate the curve, apply statistical techniques to the
curves themselves. This approach is fundamental in functional data
analysis, and in recent years much effort has been put forth to adapt
currently available procedures in multivariate analysis to functional
data. The goal of the present paper is to develop a test for independence
between two functional time series.\\
In the context of checking for independence between two second order
stationary univariate time series, Haugh (1976)  proposed a testing
procedure based on sample cross--correlation estimators. His test may be
considered as an adaptation of the popular Box--Ljung--Pierce portmanteau
test (cf.\ Ljung and Box (1978)) to two samples.  In a similar progression
the multivariate portmanteau test of Li and Mcleod (1981) was extended
to test for correlation between two multivariate ARMA time series by El Himdi
and Roy (1997) whose test statistic was based on cross--correlation
matrices. The literature on such tests has also grown over the years to
include adaptations for robustness as well as several other
considerations, see Koch and Yang (1986), Li and Hui (1996) and El Himdi et
al (2003) for details. Many of these results are summarized in Li (2004).
A separate approach for multivariate data based on the  distance correlation measure is developed  in Sz\'ekely and Rizzo (2013).\\
The analysis of functional time series has seen increased attention in
statistics, economics  and in other applications over the last decade, see Horv\'ath and Kokoszka
(2012) for a summary of recent advances. To test for independence within a
single functional time series, Gabrys and Kokoszka (2007) proposed a
method where the functional observations are projected onto finitely many
basis elements, and a multivariate portmanteau test is applied to the
vectors of scores which define the projections. Horv\'ath et al (2013)
developed a portmanteau test for functional data in which the inference is
performed using the operator norm of the empirical covariance operators at
lags $h$, $1 \le h \le H$, which could be considered as a direct
functional analog of the Box--Ljung--Pierce test. Due to the infinite
dimension of functional data, a normal limit is established for the test
statistic rather than the classical $\chi^2$ limit with degrees of freedom
depending on the data dimension.\\
The method that we propose for testing noncorrelation between two
functional time series follows the example of Horv\'ath et al (2013).
Suppose that we have observed $X_1(t),\ldots,X_n(t)$ and
$Y_1(s),\ldots,Y_n(s)$ which are samples from jointly stationary sequences
of square integrable random functions on [0,1]. Formally we are interested
in testing
$$
H_0:\mbox{the sequences}\;\;
\{X_i\}_{i=1}^\infty\;\;\mbox{and}\;\;\{Y_j\}_{j=1}^\infty\;\;\mbox{are
independent}
$$
against the alternative
\begin{align*}
H_A:&\mbox{ for some integer $h_0$, $-\infty < h_0 < \infty$,  $\intt
C_{h_0}^2(t,s)dtds>0$} \\
&\mbox{ where $C_{h_0}(t,s)=\mbox{Cov}(X_0(t),Y_{h_0}(s))$.}
\end{align*}
We use the notation $\int$ to mean $\int_0^1$. Assuming jointly Gaussian distributions for the underlying observations, independence reduces to zero cross--correlations at all lags, and hence  $H_A$ is equivalent to the complement of $H_0$ in that case.
To derive the
test statistic, we note that under $H_0$, the sample cross--covariance functions
$$
\hat{C}_{n,h}(t,s)=\begin{cases}
\displaystyle \frac{1}{n}\sum_{i=1}^{n-h} (X_i(t)-\bar{X}(t))(Y_{i+h}(s)-\bar{Y}(s)) &
h\ge 0 \vspace{.3 cm}\\
\displaystyle\frac{1}{n}\sum_{i=1-h}^{n} (X_i(t)-\bar{X}(t))(Y_{i+h}(s)-\bar{Y}(s)) & h<0
\end{cases}
$$
should be close to the zero function for all choices of $h$, where
$$
\bar{X}(t)=\frac{1}{n}\sum_{i=1}^n
X_i(t),\;\;\mbox{and}\;\;\bar{Y}(s)=\frac{1}{n}\sum_{i=1}^n Y_i(s).
$$
Under $H_A$ a cross covariance function is different
from the zero function for at least one $h$. The test statistic is then
based on
$$
\hat{T}_{n,H}= \sum_{h=-H}^H \intt \hat{C}^2_{n,h}(t,s)dtds
$$
with the hope that it includes the covariance estimator corresponding to
a non zero function if it exists. We then reject $H_0$ for large values of
$\hat{T}_{n,H}$. Our main result is the asymptotic distribution of $\hat{T}_{n,H}$ under $H_0$.\\
In order to cover a large class of functional time series processes, we assume that $\X=\{X_i\}_{i=-\infty}^\infty$ and $\bY=\{Y_i\}_{i=-\infty}^\infty$ are approximable Bernoulli shifts.
We say that $\bet=\{\eta_{j}(t)\}_{j=-\infty}^\infty$ is an $L^4$ absolutely
approximable Bernoulli shift in  $\{\epsilon_{j}(t), -\infty<j<\infty\}$ if
\begin{align}\label{ber-1}
&\eta_{i}=g(\epsilon_{i},\epsilon_{i-1},...)\;\;
\mbox{for some nonrandom measurable function}\\
& g:S^\infty \mapsto
L^2 \mbox{  and i.i.d.\ random innovations}\; \epsilon_{j},\;-\infty<j<\infty,\notag\\
&\mbox{with values in a  measurable space}\;\; S,\notag
\end{align}
\begin{align}\label{ber-2}
\eta_{j}(t)=\eta_{j}(t,\omega)\;\;\mbox{is jointly measurable
in}\;\;(t,\omega)\;\;(-\infty <j<\infty ),
\end{align}
and
\begin{align}\label{ber-3}
 &\mbox{the sequence} \;\; \{\bet\} \;\;\mbox{can be approximated by} \;\;
\ell\mbox{--dependent sequences}\\
 &\{\eta_{j,\ell}\}_{j=-\infty}^\infty \;\mbox{in the sense that}
\;\;\notag\\
& \hspace{3 cm}
\sum_{\ell=1}^\infty \ell(E\|\eta_{j}-\eta_{j,\ell}\|^{4})^{1/4}<\infty \notag\\
&\mbox{where}\;\;\; \eta_{j,\ell}\;\;\mbox{is defined by}\;\;
\eta_{j,\ell}=g(\epsilon_{j},\epsilon_{j-1},...,\epsilon_{j-\ell+1},\beps_{j,\ell}^*),
\notag\\
&\beps^*_{j,\ell}=(\epsilon_{j,\ell, j-\ell}^*,\epsilon_{j,\ell,
j-\ell-1}^*,\ldots), \mbox{where the }\;\; \epsilon^*_{j,\ell,k}\mbox{'s}
\;\;\mbox{are independent copies of}
\notag\\
&\epsilon_{0},\;\mbox{independent of} \;\{\epsilon_{j},
-\infty<j<\infty\}. \notag
\end{align}
In assumption \eqref{ber-1} we take $S$ to be an arbitrary measurable space, however in most applications $S$ is itself a function space and the evaluation of $ g(\epsilon_{i},\epsilon_{i-1},...)$ is a functional of $\{\epsilon_j(t)\}_{j=-\infty}^i$. In this case assumption \eqref{ber-2} follows from the joint measurability of the  $\epsilon_i(t,\omega)$'s.
Assumption  \eqref{ber-3} is stronger than the requirement $\sum_{\ell=1}^\infty (E\|\eta_{j}-\eta_{j,\ell}\|^{2})^{1/2}<\infty$ used by H\"ormann and Kokoszka (2010), Berkes et al (2013) and Jirak (2013) to establish the central limit theorem for sums of Bernoulli shifts. Since we need the central limit theorem for sample correlations, higher moment conditions and a faster rate in the approximability  with $\ell$--dependent sequences are needed.\\
We assume that the  sequences $\X$ and $\Y$ satisfy the following  conditions:
\begin{assumption}\label{as-2}\;\; $E\|X_0\|^{4+\delta}<\infty$\;and \;$E\|Y_0\|^{4+\delta}<\infty$\;\;\;with some \;$\delta>0$,
\end{assumption}
\begin{assumption}\label{as-3}\;\; $\bX=\{X_{i}(t)\}_{i=-\infty}^\infty$ is an $L^4$
absolutely approximable Bernoulli shift in $\{\epsilon_{j}(t), -\infty<j<\infty\}$,
\end{assumption}
and
\begin{assumption}\label{as-4}\;\; $\bY=\{Y_{i}(t)\}_{i=-\infty}^\infty$ is an $L^4$ absolutely
approximable Bernoulli shift in  $\{\bar{\epsilon}_{j}(t), -\infty<j<\infty\}$.
\end{assumption}
The functions defining the Bernoulli shift sequences $\X$ and $\Y$ as in \eqref{ber-1} will be denoted by $g_\X$ and $g_\Y$, respectively. The independence of the sequences $\bX$ and $\bY$ stated under $H_0$ is conveniently given by:
\begin{assumption}\label{as-5}\;\; The sequences $\{\epsilon_{j}(t), -\infty<j<\infty\}$ and $\{\bar{\epsilon}_{j}(t), -\infty<j<\infty\}$ are independent.
\end{assumption}
The parameter $H$ defines the number of lags used to
define the test statistic. As $n$ increases it becomes possible to
accurately estimate cross covariances for larger lags, and thus we allow
$H$ to tend to infinity with the sample size. Namely,
\begin{assumption}
\label{as-1} \;\;$H=H(n)\to \infty$ and  ${H}{n^{-\tau}}\to 0$, as  $n\to \infty$, with some $0<\tau< 2\delta/(4+7\delta)$, where $\delta$ is defined in Assumption \ref{as-2}.
\end{assumption}

  To state the limit  result for $\hat{T}_{n,H}$ we first introduce the asymptotic expected value and variance.  Let for all $-\infty<j<\infty$
$$
\gamma_{\X}(j)=\int \mbox{cov}(X_0(t), X_j(t))dt,
\quad
\gamma_{\Y}(j)=\int \mbox{cov}(Y_0(t), Y_j(t))dt
$$
and define
\beq\label{a-def}
\mu=
\sum_{j=-\infty}^\infty\gamma_{\X}(j)\gamma_{\Y}(j).
\eeq
It is shown in Lemma \ref{l-var}  that under Assumptions \ref{as-3} and \ref{as-4} the infinite sum in the definition of $\mu$ above is absolutely convergent. Let
$$
\sigma^2_h=2\int \!\!\!\cdots\!\!\!\int\left(\sum_{\ell=-\infty}^\infty\mbox{cov}(X_0(t), X_\ell(s))\mbox{cov}(Y_0(u), Y_{\ell+h}(v))\right)^2dtdsdudv
$$
and
\beq\label{sigdef}
\sigma^2=\sum_{h=-\infty}^\infty \sigma_h^2.
\eeq
\begin{theorem}\label{main} If Assumptions \ref{as-2}--\ref{as-1} hold, then we have
\begin{equation*}
\frac{n\hat{T}_{n,H}-(2H+1)\mu}{(2H+1)^{1/2}\sigma}\;\;\;\stackrel{{\mathcal D}}{\to}\;\;\;{\mathcal N},
\end{equation*}
where ${\mathcal N}$ stands for a standard normal random variable.
\end{theorem}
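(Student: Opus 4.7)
The proof proceeds in three main stages. First, one reduces to zero-mean observations (by invariance of $\hat C_{n,h}$ under constant shifts of $X_i,Y_i$) and shows that the sample-mean corrections are asymptotically negligible. Writing $\hat C_{n,h}(t,s)=\tilde C_{n,h}(t,s)-\bar X(t)\bar Y(s)+(\text{edge terms})$ with $\tilde C_{n,h}(t,s)=n^{-1}\sum_i X_i(t)Y_{i+h}(s)$, one has $\|\bar X\|,\|\bar Y\|=O_P(n^{-1/2})$ from Assumption \ref{as-2}, while Lemma \ref{l-var} yields $\intt \tilde C_{n,h}^2\,dtds=O_P(n^{-1})$ uniformly in $|h|\le H$. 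Cauchy--Schwarz then gives $\sum_{h=-H}^{H}n\intt(\hat C_{n,h}^2-\tilde C_{n,h}^2)\,dtds=o_P(H^{1/2})$ under Assumption \ref{as-1}, reducing the problem to proving the CLT for $\sum_h n\intt \tilde C_{n,h}^2\,dtds$.

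Second, fix a slowly growing $\ell=\ell(n)\to\infty$ and replace $X_i,Y_i$ by their $\ell$-dependent approximants $X_{i,\ell},Y_{i,\ell}$ from Assumption \ref{ber-3}, yielding $\tilde C^{(\ell)}_{n,h}$. The $L^4$ bound in \eqref{ber-3}, combined with Minkowski's and Cauchy--Schwarz's inequalities applied to the products $X_iY_{i+h}-X_{i,\ell}Y_{i+h,\ell}$, shows that this substitution contributes $o_P(H^{1/2})$ to $n\hat T_{n,H}$. Then expand
$$
n\intt\bigl(\tilde C^{(\ell)}_{n,h}\bigr)^2\,dtds=\frac{1}{n}\sum_{i,j}\langle X_{i,\ell},X_{j,\ell}\rangle\,\langle Y_{i+h,\ell},Y_{j+h,\ell}\rangle,
$$
and let $\zeta^{(\ell)}_{n,h}$ denote its centered version. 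Under Assumption \ref{as-5}, the $X$- and $Y$-inner products are independent, and since the $Y_{i,\ell}$ are $\ell$-dependent in $i$, the sequence $\{\zeta^{(\ell)}_{n,h}\}_h$ is, for each fixed $n$, stationary and $2\ell$-dependent in the lag index $h$. A standard triangular-array CLT for $m$-dependent sequences (with $m=2\ell$ growing slowly relative to $H$) then yields
$$
\frac{\sum_{h=-H}^{H}\zeta^{(\ell)}_{n,h}}{\sqrt{(2H+1)\,\sigma_{(\ell)}^2}}\stackrel{\mathcal D}{\longrightarrow}\mathcal N,
$$
with $\sigma_{(\ell)}^2\to\sigma^2$ as $\ell\to\infty$ by dominated convergence from \eqref{sigdef}. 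Finally, one checks that $E\bigl[n\intt(\tilde C_{n,h}^{(\ell)})^2\,dtds\bigr]=\mu+O(n^{-1})$ uniformly in $h$ (via Lemma \ref{l-var} applied to the $\ell$-dependent approximants), matching the centering in the theorem.

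The main obstacle will be the simultaneous passage to the limit in $n$, $\ell$ and $H$. The truncation length $\ell$ has to be large enough that $\sigma_{(\ell)}^2$ is close to $\sigma^2$ and that the $2\ell$-dependent blocking in the lag variable accurately captures the dependence between nearby $\zeta^{(\ell)}_{n,h}$, yet small enough that the $\ell$-dependent substitution errors from the first two stages, when summed over the $2H+1$ lags and divided by $(2H+1)^{1/2}\sigma$, tend to zero in probability. This is precisely why the stronger $L^4$ approximability of Assumption \ref{ber-3} is imposed (rather than the $L^2$ version used in the ordinary CLT for Bernoulli shifts), and why the rate restriction $Hn^{-\tau}\to0$ with $\tau<2\delta/(4+7\delta)$ in Assumption \ref{as-1} takes that specific form: it is exactly the range in which the truncation $\ell=\ell(n)$ can be chosen to render all three limit operations compatible.
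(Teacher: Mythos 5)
Your first two stages (dropping the sample means, then replacing $X_i,Y_i$ by finitely dependent approximants) correspond to Lemmas \ref{nomean} and \ref{mapp} of the paper and are fine in outline, although the paper keeps the truncation level $m$ \emph{fixed} and uses a double limit $\lim_{m}\limsup_{n}$ together with $\bar\mu(m)\to\mu$, $\bar\sigma^2(m)\to\sigma^2$ (Lemma \ref{l-var}) rather than a joint $\ell(n)\to\infty$. The real problem is your third stage. The claim that $\{\zeta^{(\ell)}_{n,h}\}_{h}$ is $2\ell$-dependent in the lag index $h$ is false: writing
\begin{equation*}
\zeta^{(\ell)}_{n,h}=\frac1n\sum_{i,j}\langle X_{i,\ell},X_{j,\ell}\rangle\,\langle Y_{i+h,\ell},Y_{j+h,\ell}\rangle-E[\cdot],
\end{equation*}
every $\zeta^{(\ell)}_{n,h}$, $|h|\le H$, contains the \emph{same} factors $\langle X_{i,\ell},X_{j,\ell}\rangle$, $1\le i,j\le n$, and the $Y$-factors for lags $h$ and $h'$ coincide whenever $(k,l)=(i+h-h',j+h-h')$; the variables for distant lags are therefore built from essentially the same pool of random functions and are not independent for any separation. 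Independence of the $X$- and $Y$-innovations (Assumption \ref{as-5}) does not rescue this. What is actually true is only that $\mathrm{cov}(\zeta^{(\ell)}_{n,0},\zeta^{(\ell)}_{n,h})$ decays summably in $h$ (this is the content of \eqref{m-2} and Lemma \ref{l-lem}), and summable covariances alone do not license an $m$-dependent or mixing CLT in $h$. So the central limit theorem — the heart of the proof — is not established by your argument.

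The paper's route around this is to block not in $h$ but in the observation-index plane: in Lemma \ref{m-cent} the double sum over $(i,j)\in\{1,\dots,n\}^2$ is partitioned into large squares $B(k,\ell)$ of side $N=H^3\log H$ separated by corridors of width $3H>2H+m$, each big-block sum $e_{k,\ell}$ already incorporating the full sum over $|h|\le H$. Because the corridors exceed the range $2H+m$ of dependence of the $m$-dependent approximants across all lags used, the $e_{k,\ell}$ are genuinely i.i.d., Lyapunov's condition is checked with the $(4+\delta)$-moment assumption, and the corridor contributions are shown to be $o_P(nH^{1/2})$ using the covariance bounds of Appendix \ref{sec-mom}. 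Some such device producing exactly independent summands (or a verified mixing condition) is needed; without it your proposal does not close.
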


Upon the estimation of $\mu$ and $\sigma^2$ this result supplies an asymptotic test for $H_0$.\\
The rest of the paper is organized as follows. In Section \ref{applic} we discuss the estimation of the parameters appearing in  Theorem \ref{main} as well as a simulation study of how the limit result is manifested in  finite sample sizes. The statistical utility of Theorem \ref{main} is then  illustrated by an application to tick data from several stocks listed on the NYSE. The proof of the main result is given in Section \ref{proofs}. The paper concludes with two appendices which contain some technical  results  needed in Section \ref{proofs}.

\section{Finite sample properties and an application} \label{applic}\setcounter{equation}{0}
\subsection{Parameter estimates}\label{parem}
In order to use Theorem \ref{main} for data analysis it is necessary  to estimate $\mu$ and $\sigma^2$ from the sample. Let
$$
\hat{\gamma}_{\X,\ell}(t,s)=\begin{cases}
\displaystyle \frac{1}{n}\sum_{i=1}^{n-\ell} (X_i(t)-\bar{X}(t))(X_{i+\ell}(s)-\bar{X}(s)) &
\ell\ge 0 \vspace{.3 cm}\\
\displaystyle\frac{1}{n}\sum_{i=1-\ell}^{n} (X_i(t)-\bar{X}(t))(X_{i+\ell}(s)-\bar{X}(s)) & \ell<0
\end{cases}
$$
and
$$
\hat{\gamma}_{\Y,\ell}(t,s)=\begin{cases}
\displaystyle \frac{1}{n}\sum_{i=1}^{n-\ell} (Y_i(t)-\bar{Y}(t))(Y_{i+\ell}(s)-\bar{Y}(s)) &
\ell\ge 0 \vspace{.3 cm}\\
\displaystyle\frac{1}{n}\sum_{i=1-\ell}^{n} (Y_i(t)-\bar{Y}(t))(Y_{i+\ell}(s)-\bar{Y}(s)) & \ell<0.
\end{cases}
$$
Since $\mu$ is an infinite sum of integrated correlations, it is natural to use a kernel estimator   of the form
$$
\hat{\mu}_n=\sum_{\ell=1-n}^{n-1}K_1\left(\frac{\ell}{w_1}\right)\intt \hat{\gamma}_{\X,\ell}(t,s)dtds\intt\hat{\gamma}_{\Y,\ell}(t,s)dtds,
$$
where $K_1$ is a kernel function with window  $w_1=w_1(n)$. According to the definition in \eqref{sigdef}, $\sigma^2$ is an infinite sum of integrals of squared  correlations and  so we can use again a kernel type estimator:
$$
\hat{\sigma}_n^2=\sum_{h=-2H}^{2H}K_2\left(\frac{h}{w_2}\right)\hat{\tau}_{n,h},
$$
where $K_2$ is a kernel with window  $w_2=w_2(H)$,
$$
\hat{\tau}_{n,h}=2\int\!\!\!\cdots\!\!\!\int\left(\sum_{\ell=h-n}^{n-h} \left(1-\frac{|\ell|}{n}\right)\hat{\gamma}_{\X,\ell}(t,s)\hat{\gamma}_{\Y, \ell+h}(u,v)\right)^2dtdsdudv.
$$
We  note that $\sigma^2$ is essentially the long run variance of the integrals $\intt \hat{C}_{n,h}^2(t,s)dtds, -2H\leq h \leq 2H$, and $\tau_h$ is the correlation of lag $h$ between these  integrals. Hence $\hat{\tau}_{n,h}$ can be considered as an estimated correlation and $\hat{\sigma}_n^2$ is a kernel  long run variance estimator based on the estimated correlations. We assume that the kernels $K_i, i=1,2$  satisfy  standard regularity  conditions: (i) $K_i(0)=1$, (ii) $K_i$ is continuous and bounded (iii) there is $c>0$ such that $K_i(u)=0$ if $|u|>c$. It can be shown if $w_1=w_1(n)\to \infty, w_1(n)/n^{1/2}\to 0$ as $n\to \infty$, then $\hat{\mu}_n=\mu+o_P(H^{-1/2})$ under Assumption \ref{as-1}. If $w_2=w_2(H)\to \infty, w_2(H)/H\to 0$, as $H\to \infty$, then one can show that $\hat{\sigma}_n^2\to \sigma^2$ in probability as $n\to \infty$. Hence Theorem \ref{main} yields
\begin{equation}\label{vdef}
{V}_{n,H}=\frac{n\hat{T}_{n,H}-(2H+1)\hat{\mu}_n}{(2H+1)^{1/2}\hat{\sigma}_n}\;\;\;\stackrel{{\mathcal D}}{\to}\;\;\;{\mathcal N},
\end{equation}
where ${\mathcal N}$ stands for a standard normal random variable.
\subsection{Simulation study}\label{simsec}
We now turn to a small simulation study which compares the limit result in Theorem \ref{main} to the finite sample behavior of $\hat{T}_{n,H}$. In order to describe the data generating processes (DGP's) used below, let $\{W_{X,i}\}_{i=1}^{n}$ and $\{W_{Y,i}\}_{i=1}^{n}$ be independent sequences of iid standard Brownian motions on $[0,1]$. Two DGP's were considered which each satisfy the conditions of Theorem \ref{main}:
\begin{align*}
&\hspace*{-2.5 cm}\mbox{IID: two samples of independent Brownian motions, i.e. }
X_i(t)=W_{X,i}(t),\\ 
&\hspace{-1.1 cm}
Y_i(t)=W_{Y,i}(t),
1\le i \le n,\; t\in[0,1].
\end{align*}

and
\begin{align*}
&\mbox{FAR}_q\mbox{(1): two samples which follow a stationary functional autoregressive model of order}\\
&\hspace{1.5 cm}\mbox{ one, i.e.\ for }  1\le i \le n,\; t\in[0,1],\\
&\hspace{1.7 cm}X_i(t)=\int \psi_q(t,u)X_{i-1}(u)du+W_{X,i}(t),\quad 
Y_i(t)=\int \psi_q(t,u)Y_{i-1}(u)du+W_{Y,i}(t),  \\
&\hspace{1.5 cm}\mbox{ where } \psi_q(t,u)=q\min(t,u). \\
\end{align*}

The motivation behind using Brownian motions for the error sequences in each of the DGP's is due to our application in Section \ref{appsec} to cumulative intraday returns which often look like realizations of Brownian motions, see Figure \ref{lapcurves} below. A precise but somewhat technical condition for the existence of a stationarity solution in the FAR$_q$(1) model is given in Bosq (2000), however it is sufficient to assume that the operator norm of $\psi$ is less than one. In our case we take $q=.75, 1.5, 2.25$ so that $\|\psi\|\approx .25, .5, .75$, respectively. To simulate the stationary FAR$_q$(1) process we used a burn in sample of size 100 starting from an independent innovation. In order to compute the statistic
$V_{n,H}$ of \eqref{vdef}
we must select the kernels $K_1$ and $K_2$ as well as the windows $w_1$ and $w_2$  used to compute the estimates $\hat{\mu}_n$ and $\hat{\sigma}_n^2$. For our analysis we used Bartlett kernels

$$
K_1(t)=K_2(t)=\left\{
\begin{array}{ll}
1-|t|,\quad &0\leq |t| <1\\
0,\quad &|t|\geq 1,
\end{array}
\right.
$$
and windows $w_1(r)=w_2(r)=\lf r^{1/4} \rf$. The simulations results below were repeated for several other common choices of kernel functions including the Parzen and flat--top kernels, as well as different choices for the window parameters. The changes in the results were negligible for different choices of the kernel functions. The results were also stable over window choices between $w_i(r)=\lf r^{1/4}\rf$ to $w_i(r)=\lf r^{1/2}\rf$ which constitute the usual range of acceptable windows. Each DGP was simulated 1000 times independently for various values of $n$ and $H$, and the percentage of test statistics $V_{n,H}$ which exceeded the 10, 5, and 1 percent critical values of the standard normal distribution are reported in Table \ref{tab1}.

{\small
\begin{table}[pht!]
\begin{tabular}{c  c | c  c  c | c  c  c | c  c  c | c  c  c   }
\multicolumn{2}{c}{DGP} & \multicolumn{3}{c}{{IID}} &
\multicolumn{3}{c}{{$\mbox{FAR}_{.75}(1)$}} &
\multicolumn{3}{c}{{$\mbox{FAR}_{1.5}(1)$}}&
\multicolumn{3}{c}{{$\mbox{FAR}_{2.25}(1)$}}  \\
$N$ & H & 10\% & 5\% & 1\% & 10\% & 5\% & 1\% & 10\% & 5\% & 1\% & 10\% &
5\% & 1\% \\
\hline
 50  & 2 & 13.8 &8.0  & 2.7  &9.7&6.1&2.8  &15.2 &10.7 & 4.8 &24.3&17.4&
8.0   \\
     & 5 & 14.6 &7.8  & 3.1  &11.4&6.3&2.9  &16.4 &11.1 & 4.1
&26.0&20.7&11.8    \\
     & 7 & 13.2 &7.6  & 3.8  &10.2&6.0&2.1  &15.5 &10.4&  5.0
&33.0&25.4&13.7    \\
\hline
 100 & 3 & 11.5& 7.0  & 2.7  &9.5&6.0&2.4   &12.4 & 7.0& 3.1
&26.8&20.1&13.3    \\
     & 5 & 10.3 & 5.1  & 2.1 &8.5&4.7&0.8   &12.8 & 7.2 & 3.8
&26.4&22.5&15.8   \\
     & 10& 9.8 & 3.9  & 1.3  &8.7&3.8&0.9   &11.4 & 6.5 & 2.7
&32.0&27.3&20.1  \\
\hline
200 & 4 & 10.0& 6.6   & 2.5  &10.2&5.8&2.1  &10.9 & 6.4& 3.0
&16.5&12.8&7.2   \\
    & 7 & 8.7 & 4.8   & 1.2  &11.0&7.1&2.5  &10.5 & 6.0& 1.9
&21.4&17.7&12.6     \\
    & 15& 9.8 & 5.2   & 1.3  &8.4&4.3&1.7   &10.4 & 7.0& 2.7
&25.5&20.6&14.7     \\
\hline
300 & 5 & 10.3 &5.4  & 1.2   &9.7&6.0&2.1  &9.7 & 5.5  & 2.2
&15.3&10.5&6.8    \\
    & 10& 10.5 &5.7  & 1.4   &10.0&5.7&2.3  &10.4 & 5.3 &1.9   &21.0
&18.1&12.7    \\
    & 17& 9.8  &4.7  & 1.1   &10.1&5.1&1.2  &10.9 & 5.7 & 2.4  &26.4
&22.5&16.4     \\
\end{tabular}
\caption{The percentage of 1000 simulated values of $V_{n,H}$ which
exceeded the 10\%, 5\%, and 1\% critical values of the standard normal
distribution for the DGP's IID and $\mbox{FAR}_q(1)$. }
\label{tab1}
\end{table}
}

Based on this data we have the following remarks:
\begin{itemize} 
\item[(i)] When the level of dependence within the processes  is not too strong the tail of the distribution of $V_{n,H}$ appears to be approaching that of the standard normal as $n\to \infty$.
\item[(ii)] There appears to be little effect in the results due to the choice of $H$ when the dependence is weak. Even for values of $H$ close to $n^{1/2}$ we still observe finite sample distributions which are close to the limit.
\item[(iii)] When the dependence is strong , i.e.\ $q=2.25$, the tail of the distribution of $V_{n,H}$ becomes significantly heavier than that of the standard normal. This effect is worsened by increasing $H$.
\item[(iv)] In the context of hypothesis testing, $V_{n,H}$ achieves good size even when the temporal dependence is weak to moderate as long as the sample size is large. If strong dependence is suspected in the data then the distribution of $V_{n,H}$ may be skewed to the right.
\end{itemize}

\subsection{Application to cumulative intraday stock returns} \label{appsec}

\begin{figure}
\centering \caption{Five functional data
    objects constructed from the 1-minute average price of
    Apple stock. The vertical lines separate days. }
\vspace*{-8mm}
\includegraphics[scale=.45, width=4in,angle=0]{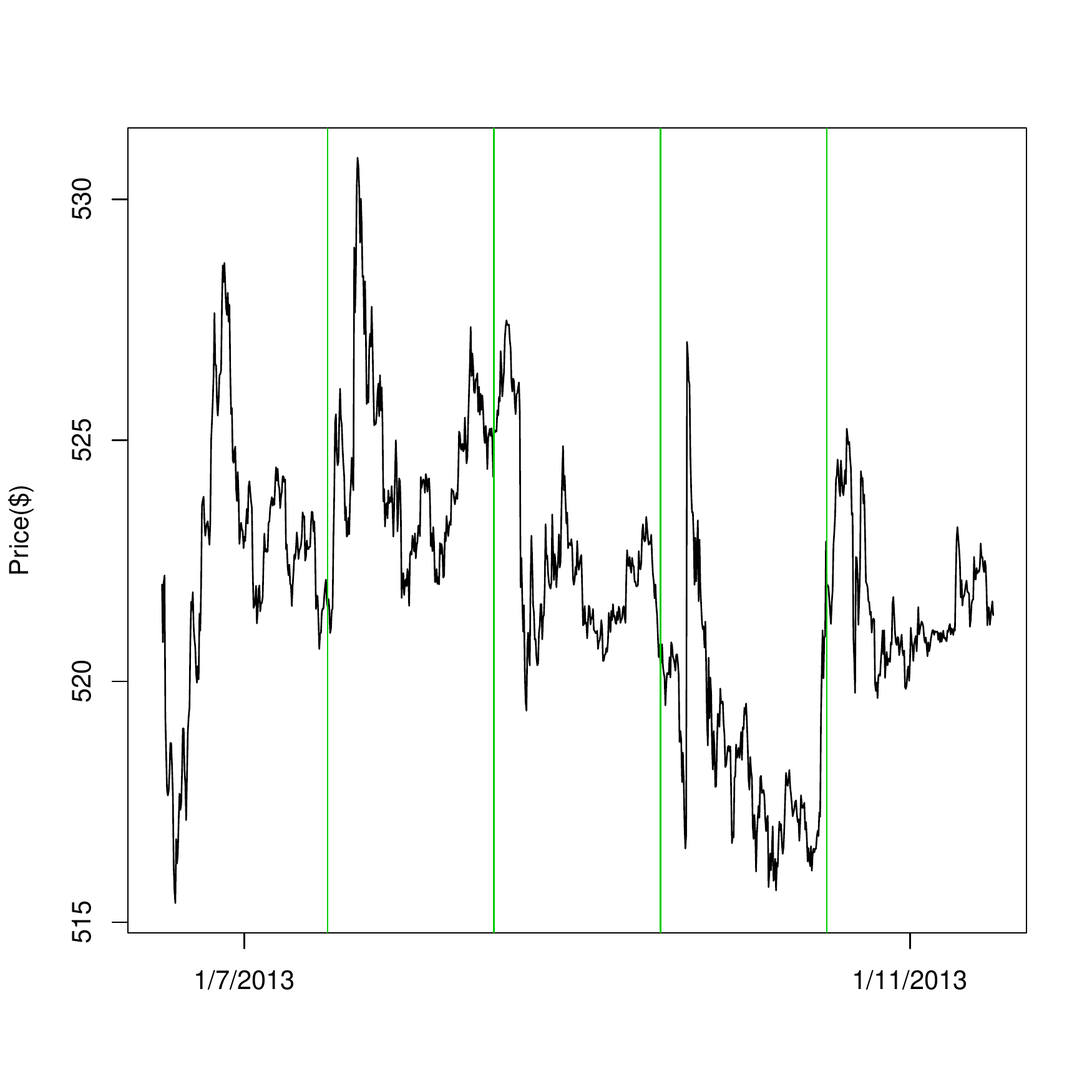}
\label{apcurves}
\end{figure}
A natural example of functional data are intraday price curves constructed from densely observed price records of an asset. Five of such curves constructed from the 1 minute average stock price of Apple are shown in Figure \ref{apcurves}. Several authors have studied the analysis of the shapes of price curves, see M\"uller et al (2011) and Kokoszka et al (2013) for a small sample of such work. In our application we use \eqref{vdef} to test for correlation between samples of curves constructed from pairs of 20 of the highest trade volume stocks listed on the New York Stock Exchange. The data we consider was obtained from  {\tt www.nasdaq.com}  and {\tt  www.finam.com}  and consists of the 1 minute average stock prices of the stocks listed in Table \ref{tab3} from January 1st, 2013 to December 11, 2013, which comprise 254  daily records per stocks. Since the test statistic is based on integrals of the sampled curves its value changes little due to the way the curves are constructed from the original observations, and thus we simply used linear interpolation. It is argued in Horv\'ath et al (2014) that price curves themselves are typically not stationary and thus must be suitably transformed before analysis assuming stationarity may be performed. Scaler price data is typically transformed to appear closer to stationary by taking the log differences, and in our example we employ a similar technique for functional data which was proposed in Gabrys et al (2010):

\begin{defi} \label{def:icr}
Suppose $P_{n}(t_j), n=1,\ldots, N, j=1,\ldots, m$, is the price of
a financial asset at time $t_j$ on day $n$. The functions
\[
R_{n}(t_j) = 100[\ln P_{n}(t_j)-\ln P_{n}(t_1)], \ \ \ \
j = 1, 2,\ldots, m,\ \ \ \
n =1, \ldots, N,
\]
are called the {\em  cumulative intraday returns} (CIDR's).
\end{defi}

{\small
\begin{table}[pht!]
\begin{tabular}{c   | c  | c | c  }
Company Name & Ticker Symbol & Company Name & Ticker Symbol \\
\hline
Apple & AAPL &  AT\&T & ATT  \\
Bank of America & BAC & Boeing & BA \\
Chevron & CVX & Cisco Systems & CSCO \\
Citigroup & C & Coca Cola & KO \\
DuPont & DD & Exxon Mobil & XOM \\
Google & GOOG & HP    & HPQ \\
IBM & IBM & Intel & INTC  \\
McDonalds & MCD & Microsoft & MSFT \\
Verizon & VZ & Walmart & WMT \\
Disney & DIS & Yahoo & YHOO
\end{tabular}
\caption{Company names and ticker symbols for the 20 stocks which were compared. }
\label{tab3}
\end{table}
}

The CIDR's of the five curves in Figure \ref{apcurves} are shown in Figure \ref{lapcurves}. It is apparent that the overall shapes of the price curves are retained by the CIDR's and, since the CIDR's always start from zero, level stationarity is enforced. A more rigorous argument for the stationarity of the CIDR's is given in Horv\'ath et al (2014).

\begin{figure}
\centering \caption{Five cumulative intraday return curves constructed from the price curves displayed in Figure \ref{apcurves}. }
\vspace*{-8mm}
\includegraphics[scale=.45, width=4in,angle=0]{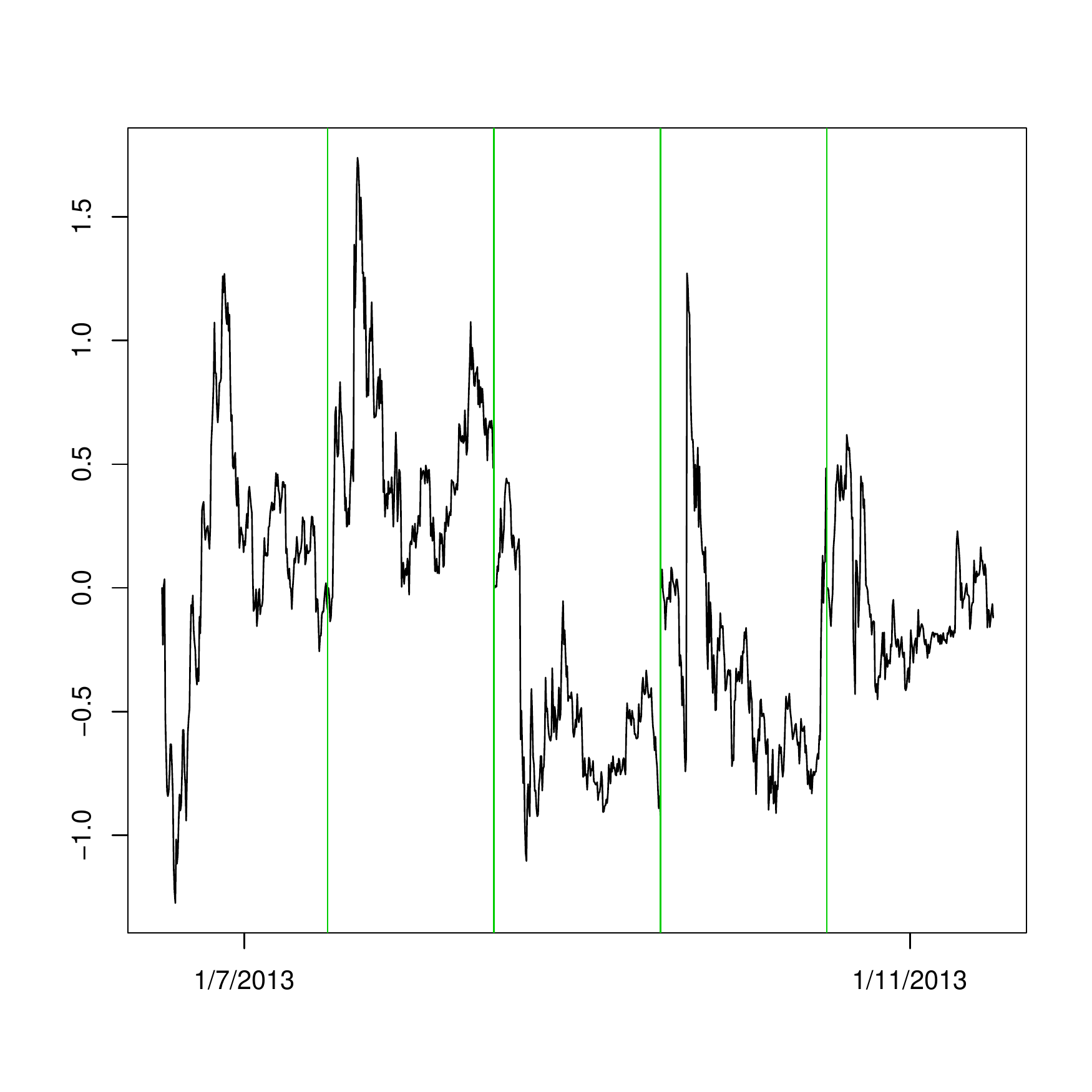}
\label{lapcurves}
\end{figure}

Based on the 20 stocks we used there are 190 pairs of CIDR samples for comparison. For each pair the test statistic $V_{n,H}$ was computed as outlined in Section \ref{simsec} using $w_1(r)=w_2(r)=\lf r^{1/4}\rf$ and $H=4\approx (254)^{1/4}$. An approximate $p$--value of the test was then calculated by taking $1-\Phi(V_{n,H})$, where $\Phi$ is the standard normal distribution function. Of the 190 $p$--values computed 70\% were less than .05 indicating that   there is strong correlation between the CIDR's of most pair of stocks listed on the NYSE. More insight can be achieved by looking at individual comparisons. Table \ref{tab2} contains the $p$--values of all comparisons of a subset of 9 of the stocks. Some clusters are  apparent, like the technology companies (AAPL,IBM,GOOG), energy companies (CVX,XOM) and service companies (WMT,DIS,MCD) for which all of the within group comparisons give effectively zero $p$-values. Also many of the comparisons between the tech companies and energy companies yielded large $p$--values, indicating that these groups of CIDR's  exhibit little cross correlation. Similar clusters became apparent among the rest of the sample of 20 stocks, and using cluster analysis methods based on the values of $V_{n,H}$ could lead to further insights. For example, one could say that stocks A and B are similar if the value of
$V_{n,H}$ computed between the CIDR's from A and B exceeds a certain
threshold, i.e.\    stocks A and B are highly correlated. This will not lead to perfect clustering, however, using
correlation clustering algorithms as in Kriegel et al (2009), the number
of similarities across clusters or dissimilarities within clusters can be
minimized.

{\small
\begin{table}[pht!]
\begin{tabular}{l   | c  c  c  c  c  c c c }
IBM  &.000 &     &      &      &      &      &      &\\
GOOG &.001 &.000 &      &      &      &      &      &\\
XOM  &.541 &.612 & .051 &      &      &      &      &\\
CVX  &.531 &.521 & .003 & .000 &      &      &      &\\
C    &.329 &.007 & .006 & .008 & .001 &      &      &\\
WMT  &.602 &.358 & .000 & .001 & .000 & .000 &      &\\
DIS  &.221 &.002 & .000 & .014 & .000 & .000 & .001 & \\
MCD  &.000 &.018 & .457 & .185 & .000 & .133 & .000 & .043 \\
\hline
     &AAPL &IBM  & GOOG & XOM & CVX   & C &  WMT & DIS
\end{tabular}
\caption{Approximate $p$--values for a test of $H_0$ based on $V_{n,H}$ of \eqref{vdef} for all pairwise comparisons of the 9 stocks AAPL, IBM, GOOG, XOM, CVX, C, WMT, DIS, and MCD.}
\label{tab2}
\end{table}
}

\section{Proof of Theorem \ref{main}}\label{proofs}\setcounter{equation}{0}
The proof of Theorem \ref{main} requires several steps. First we claim that it is enough to prove Theorem \ref{main} for the square integrals of the correlations without centering by the sample means. In the second step we argue that $\hat{T}_{n,h}$ can be approximated  with the sum of integrated squared correlations of $m$--dependent random functions if $m$ and $n$ are both large enough. The last step is the proof of the central limit theorem for the sum of integrated squared correlations of $m$--dependent functions for every fixed $m$, when $n\to \infty.$ This is established using a blocking argument.\\
It is easy to see that $\hat{T}_{n,H}$ does not depend on the means of the observations, and so  we can assume without loss of generality that
\beq\label{zero}
EX_0(t)=0\quad \mbox{and}\quad EY_0(t)=0.
\eeq
In order to simplify the calculations   we define

$$
\tilde{C}_{n,h}(t,s)\displaystyle= \frac{1}{n}\sum_{i=1}^{n} X_i(t)Y_{i+h}(s),\quad -H\leq h \leq H
$$
and
$$
\tilde{T}_{n,H}=\sum_{h=-H}^H \tilde{\xi}_{n,h}\quad \quad\mbox{with}\quad \quad  \tilde{\xi}_{n,h}=\intt \tilde{C}_{n,h}^2(t,s)ds.
$$

\begin{lemma}\label{nomean} If Assumptions \ref{as-2}--\ref{as-1}  and \eqref{zero} hold, then we have
$$
E[n(\hat{T}_{n,H}-\tilde{T}_{n,H})]^2=o(H)\;\;\;\mbox{as}\;\;\;n\to \infty.
$$
\end{lemma}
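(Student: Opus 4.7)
The plan is to isolate the contributions to $\hat T_{n,H}-\tilde T_{n,H}$ coming from the sample-mean centering and from the truncated summation range in $\hat C_{n,h}$, show that each is of lower order than $\sqrt{H}/n$ in $L^2(P)$, and combine via Minkowski. First I would expand the product $(X_i-\bar X)(Y_{i+h}-\bar Y)$ in the definition of $n\hat C_{n,h}$; for $h\ge 0$ a routine manipulation gives
\begin{align*}
n(\hat C_{n,h}-\tilde C_{n,h})(t,s) &= -\sum_{i=n-h+1}^{n}X_i(t)Y_{i+h}(s) -(n+h)\bar X(t)\bar Y(s) \\
&\quad + \bar X(t)\sum_{j=1}^{h}Y_j(s) + \bar Y(s)\sum_{i=n-h+1}^{n}X_i(t),
\end{align*}
and an analogous identity holds for $h<0$; denote this by $U_h=T_1+T_2+T_3+T_4$. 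Writing $\|\cdot\|$ and $\langle\cdot,\cdot\rangle$ for the $L^2$ norm and inner product on $[0,1]^2$, the algebraic identity $a^2-b^2=(a-b)(a+b)$ yields
$$
n(\hat\xi_{n,h}-\tilde\xi_{n,h})=\frac{\|U_h\|^2}{n} + 2\langle U_h,\tilde C_{n,h}\rangle,
$$
which reduces the task to controlling moments of $\|U_h\|$ and $\|\tilde C_{n,h}\|$.

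Next I would collect moment bounds. Assumption \ref{as-5} makes $\bX$ and $\bY$ independent, and the $L^4$-approximability in Assumptions \ref{as-3}--\ref{as-4}, together with Lemma \ref{l-var}, gives absolute summability of $\gamma_{\bX}\gamma_{\bY}$ and the standard estimates $E\|\sum_{i=1}^m X_i\|^{2k}\le C_k m^k$ for $k=1,2$ (and likewise for $Y$). Because $T_2,T_3,T_4$ factor as tensor products across the $t$ and $s$ variables, independence gives $E\|T_2\|^4=(n+h)^4 E\|\bar X\|^4 E\|\bar Y\|^4=O(1)$ and $E\|T_3\|^4=E\|T_4\|^4=O(h^2/n^2)$. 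For the boundary term $T_1=\sum_{i=n-h+1}^n X_i(t)Y_{i+h}(s)$, directly expanding $E\|T_1\|^4$ and exploiting the weak-dependence pairing structure inherited from the Bernoulli-shift representation yields $E\|T_1\|^4=O(h^2)$. Collecting, $E\|U_h\|^2=O(|h|+1)$ and $E\|U_h\|^4=O((|h|+1)^2)$; the same reasoning applied to the full-range sum gives $E\|n\tilde C_{n,h}\|^4=O(n^2)$, i.e.\ $E\|\tilde C_{n,h}\|^4=O(n^{-2})$.

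Finally I would assemble via Minkowski's inequality in $L^2(P)$ and Cauchy--Schwarz:
\begin{align*}
\bigl(E[n(\hat T_{n,H}-\tilde T_{n,H})]^2\bigr)^{1/2}
&\le \sum_{h=-H}^{H}\bigl(E[n(\hat\xi_{n,h}-\tilde\xi_{n,h})]^2\bigr)^{1/2} \\
&\le \sum_{h=-H}^{H}\Bigl[\tfrac{1}{n}(E\|U_h\|^4)^{1/2} + 2\bigl(E\|U_h\|^4\,E\|\tilde C_{n,h}\|^4\bigr)^{1/4}\Bigr] \\
&= O\!\left(\tfrac{H^2}{n}+\tfrac{H^{3/2}}{\sqrt{n}}\right).
\end{align*}
Squaring yields $E[n(\hat T_{n,H}-\tilde T_{n,H})]^2=O(H^4/n^2+H^3/n)$. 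Assumption \ref{as-1} forces $H=O(n^\tau)$ with $\tau<2\delta/(4+7\delta)<2/7<1/2$, so both $H^2/n$ and $H^3/n^2$ tend to $0$ and the right-hand side is $o(H)$.

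The main obstacle is the 4th-moment bound $E\|T_1\|^4=O(h^2)$, and its analogue $E\|n\tilde C_{n,h}\|^4=O(n^2)$, since the summands $X_i(t)Y_{i+h}(s)$ form a weakly dependent but not independent mean-zero stationary sequence in a bivariate function space. It is obtained by approximating $X_i$ and $Y_{i+h}$ by the $\ell$-dependent surrogates supplied by Assumptions \ref{as-3}--\ref{as-4}, controlling the approximation error in $L^4$ via the summability $\sum\ell(E\|\eta-\eta_\ell\|^4)^{1/4}<\infty$, and then invoking classical 4th-moment inequalities for $\ell$-dependent Hilbert-space valued sums -- the kind of estimate the authors collect in their technical appendices.
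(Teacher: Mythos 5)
Your proposal is correct, and in fact it supplies more than the paper does: the authors' ``proof'' of Lemma \ref{nomean} consists of the single sentence that the claim follows by standard arguments, with all details omitted. Your decomposition $n(\hat C_{n,h}-\tilde C_{n,h})=T_1+T_2+T_3+T_4$ is the natural one (I checked the algebra for $h\ge 0$: the two substitutions $\sum_{i=1}^{n-h}X_i=n\bar X-\sum_{i=n-h+1}^nX_i$ and $\sum_{i=1}^{n-h}Y_{i+h}=n\bar Y-\sum_{j=1}^hY_j$ do produce exactly the four terms you list), and the identity $n(\hat\xi_{n,h}-\tilde\xi_{n,h})=n^{-1}\|U_h\|^2+2\langle U_h,\tilde C_{n,h}\rangle$ together with Minkowski and Cauchy--Schwarz is surely the intended route. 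Your moment bounds are also right, and you correctly flag the only load-bearing estimates, namely $E\|T_1\|^4=O(h^2)$ and $E\|n\tilde C_{n,h}\|^4=O(n^2)$. It is worth noting that these are not merely ``the kind of estimate the authors collect in their appendices'' --- they follow directly from Lemma \ref{l-lem}: writing $\|T_1\|^2=\sum_{i,j=n-h+1}^{n}\eta_{i,j}\theta_{i+h,j+h}$, its mean is $\sum_{i,j}a(i-j)b(i-j)=O(h)$ by the absolute summability established for \eqref{a-def}, and its variance is a $V(0)$-type quantity over a window of length $h$, hence $O(h^2)$ by Lemma \ref{l-lem} with lag $0$; the $\tilde C_{n,h}$ bound is the same with window length $n$. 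The final rate $O(H^4/n^2+H^3/n)=o(H)$ holds comfortably under Assumption \ref{as-1} since $\tau<2\delta/(4+7\delta)<2/7$ gives $H^2/n\to 0$ and $H^3/n^2\to 0$. The one remaining ingredient you invoke without proof, $E\|\sum_{i=1}^mX_i\|^4\le Cm^2$, is indeed standard for Bernoulli shifts satisfying \eqref{ber-3} (approximate by $\ell$-dependent blocks and use a Rosenthal-type inequality), so the argument is complete.
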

\begin{proof} This claim can be proven by standard arguments so the details are omitted.
\end{proof}
For every $m\geq 1$ we define   according to Assumption \ref{as-3}
\beq\label{xm-def}
X_{i,m}=g_\X(\epsilon_{i},\epsilon_{i-1},...,\epsilon_{i-m+1}, \beps_{i,m}^*),
\eeq
with $\beps^*_{i,m}=(\epsilon_{i,m, i-m}^*,\epsilon_{i,m,
i-m-1}^*,\ldots),$  where the $\epsilon^*_{j,\ell,k}$' are independent copies of $\epsilon_{0}$ and
independent of $\{\epsilon_{j}, \bar{\epsilon}_{j},
-\infty<j<\infty\}.$ The random function $Y_{i,m}$ is defined analogously by
\beq\label{ym-def}
Y_{i,m}=g_\Y(\bar{\epsilon}_{i},\bar{\epsilon}_{i-1},...,\bar{\epsilon}_{i-m+1}, \bar{\beps}_{i,m}^*),
\eeq
with $\bar{\beps}^*_{i,m}=(\bar{\epsilon}_{i,m, i-m}^*, \bar{\epsilon}_{i,m,
i-m-1}^*,\ldots),$  where the $\bar{\epsilon}^*_{j,\ell,k}$' are independent copies of $\bar{\epsilon}_{0}$ and
independent of $\{\epsilon_{j}, \bar{\epsilon}_{j},\epsilon^*_{j,\ell,k},
-\infty<j,k,\ell<\infty\}.$ It follows from the definition that both $X_{i,m}, -\infty<i<\infty$ and $Y_{i,m}, -\infty<i<\infty$ are stationary $m$--dependent sequences, independent of each other. Also, for every $i$ and $m$ we have that $X_{i,m}=X_0$ and $Y_{i,m}=Y_0$ in distribution.
Next we introduce

$$
\bar{C}_m(t,s)=\bar{C}_{n,h,m}(t,s)=\displaystyle \frac{1}{n}\sum_{i=1}^{n} X_{i,m}(t)Y_{i+h,m}(s),\quad -H\leq h \leq H
$$
and
$$
\bar{T}_{n,H,m}= \sum_{h=-H}^H \bar{\xi}_{n,h,m}\quad\quad \mbox{with}\quad \quad \bar{\xi}_{n,h,m}=\intt \bar{C}_{n,h,m}^2(t,s)ds.
$$
We also use the notation
$$
J_\X(h)=\sum_{i=h}^\infty (E\|X_0-X_{0,i}\|^4)^{1/4},\;\;
G_\X(h)=\sum_{i=h}^\infty i (E\|X_0-X_{0,i}\|^2)^{1/2}
$$
and similarly
 $$
J_\Y(h)=\sum_{i=h}^\infty (E\|Y_0-Y_{0,i}\|^4)^{1/4},\;\;G_\Y(h)=\sum_{i=h}^\infty i (E\|Y_0-Y_{0,i}\|^2)^{1/2}
.
$$
in the rest of the proofs.

\begin{lemma}\label{mapp} If Assumptions \ref{as-2}--\ref{as-1} and \eqref{zero} hold, then we have

\sa
\lim_{m\to \infty}\displaystyle \underset{n\to\infty}{\mbox{\rm limsup}}E\left[\frac{n}{H^{1/2}}\left\{(\tilde{T}_{n,H}-E\tilde{T}_{n,H})-(\bar{T}_{n,H,m}-E\bar{T}_{n,H,m})\right\}\right]^2=0.
\end{align*}
\end{lemma}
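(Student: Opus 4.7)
The plan is to bound $\frac{n^2}{H}\,\mbox{Var}\bigl(\sum_{h=-H}^H V_h\bigr)$, where $V_h = \tilde\xi_{n,h} - \bar\xi_{n,h,m}$, and to show that this quantity tends to zero as $m\to\infty$, uniformly in large $n$. The starting point is the identity $a^2-b^2=(a-b)(a+b)$, giving $V_h = \intt \Delta_{n,h,m}(t,s)\Sigma_{n,h,m}(t,s)\,dtds$ with $\Delta_{n,h,m} = \tilde C_{n,h} - \bar C_{n,h,m}$ and $\Sigma_{n,h,m} = \tilde C_{n,h} + \bar C_{n,h,m}$. The essential structural point is that every summand of $\Delta_{n,h,m}$ admits the decomposition
$$X_i(t) Y_{i+h}(s) - X_{i,m}(t) Y_{i+h,m}(s) = (X_i(t) - X_{i,m}(t))\, Y_{i+h}(s) + X_{i,m}(t)\,(Y_{i+h}(s) - Y_{i+h,m}(s)),$$
so every term carries a factor of either $X_i - X_{i,m}$ or $Y_{i+h} - Y_{i+h,m}$, whose $L^4$-norms are bounded by a quantity $\epsilon_m$ that tends to zero as $m\to\infty$ by the $L^4$-approximability of Assumptions \ref{as-3} and \ref{as-4}.

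I would next expand $V_h = \frac{1}{n^2}\sum_{i,j=1}^n Q_{i,j,h,m}$, where $Q_{i,j,h,m}$ is the integrated product of the $i$-th summand of $\Delta_{n,h,m}$ with the $j$-th summand of $\Sigma_{n,h,m}$, and is itself a short sum of terms of the form $\pm\bigl(\int X_a X_b\,dt\bigr)\bigl(\int Y_{a+h}Y_{b+h}\,ds\bigr)$ with $a,b\in\{i,j\}$ and indices possibly carrying an $m$-subscript. Then
$$\mbox{Cov}(V_h, V_{h'}) = \frac{1}{n^4}\sum_{i,j,k,l=1}^n\mbox{Cov}(Q_{i,j,h,m}, Q_{k,l,h',m}),$$
and by Assumption \ref{as-5} each covariance on the right factors into pure-$\X$ and pure-$\Y$ pieces via
$$\mbox{Cov}(AB,CD) = \mbox{Cov}(A,C)\,E[B]E[D] + E[A]E[C]\,\mbox{Cov}(B,D) + \mbox{Cov}(A,C)\,\mbox{Cov}(B,D),$$
whenever $A,C$ are functionals of $\X$ and $B,D$ are functionals of $\Y$.

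The crux is bounding these pieces. Applying H\"older and Cauchy--Schwarz together with the fourth-moment condition of Assumption \ref{as-2} and the $L^4$-approximability of Assumptions \ref{as-3}--\ref{as-4}, I would show that every nonvanishing term carries a factor of $\epsilon_m^2$ coming from $E\|X_0 - X_{0,m}\|^2$ or $E\|Y_0 - Y_{0,m}\|^2$, while the weak dependence (quantified by $J_\X, J_\Y$) yields absolute summability of the $\X$- and $\Y$-covariances in three of the four index differences $(i-j,\,i-k,\,i-l)$, which produces one factor of $n$ upon summation, as well as summability in the lag difference $|h-h'|$ coming from the $\Y$-factor. Combining,
$$\sum_{h,h'=-H}^H |\mbox{Cov}(V_h, V_{h'})| \le \frac{C\,\epsilon_m^2\,H}{n^2},$$
so multiplication by $n^2/H$ gives a bound of order $\epsilon_m^2$, which tends to zero as $m\to\infty$.

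The main technical obstacle is the combinatorial bookkeeping. The product $Q_{i,j,h,m}Q_{k,l,h',m}$ expands into sixteen eighth-order moments of the $\X$ and $\Y$ quantities, and for each of them one must verify (a) the clean factorization through Assumption \ref{as-5} into $\X$- and $\Y$-pieces, (b) the presence of at least one factor of $(X_\cdot - X_{\cdot,m})$ or $(Y_\cdot - Y_{\cdot,m})$ that is preserved by the covariance estimate, so the whole contribution is $O(\epsilon_m^2)$, and (c) the required summability of the $\X$- and $\Y$-covariances in three of the four spatial indices and in $|h - h'|$. All three points reduce to the $L^4$-approximability of the Bernoulli shifts, already exploited in Lemma \ref{l-var}, but their simultaneous verification across the sixteen terms is where the real work lies.
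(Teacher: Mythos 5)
Your strategy is genuinely different from the paper's: you try to extract a uniform factor $\epsilon_m^2$ from every covariance $\mbox{Cov}(V_h,V_{h'})$ \emph{simultaneously} with summable decay in the spatial indices and in $|h-h'|$. The paper never does this. It splits the lag sum at a level $M$, handles the finitely many lags $|h|\le M$ by showing $n^2\mbox{cov}(\tilde{\xi}_{n,0}-\bar{\xi}_{n,0,m},\tilde{\xi}_{n,h}-\bar{\xi}_{n,h,m})\to 0$ for each fixed $h$ (no rate in $m$ is needed there), and controls the tail $|h|>M$ by bounds on $\mbox{cov}(\tilde{\xi}_{n,0},\tilde{\xi}_{n,h})$, $\mbox{cov}(\bar{\xi}_{n,0,m},\bar{\xi}_{n,h,m})$ and the cross term that involve no approximation error at all (Lemma \ref{l-lem}), sending $M\to\infty$ last. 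That truncation is exactly the device that avoids the difficulty your route runs into.

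The gap is that your central estimate is asserted rather than proved, and in the form stated it cannot hold for all the terms. After factoring $\mbox{Cov}(Q_{i,j,h,m},Q_{k,l,h',m})$ through Assumption \ref{as-5}, you obtain pieces of the form $\mbox{Cov}(A,C)\,E[B]E[D]$ where $A,C$ are functionals of $\X$ alone at indices $i,j,k,l$ and $B,D$ are the $\Y$-parts; by stationarity $E[B]$ and $E[D]$ depend only on $i-j$ and $k-l$, and $\mbox{Cov}(A,C)$ carries no lag at all, so this piece is \emph{constant} in $(h,h')$ and the double lag sum contributes a full factor $(2H+1)^2$ --- there is no summability in $|h-h'|$ to invoke, and closing the bound for these terms requires a different accounting (an extra power of $n$ from three genuinely summable spatial differences, absorbed using $H/n\to 0$) than the mechanism you describe. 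Second, extracting both the $\epsilon_m$ factor and summable coupling decay from one and the same covariance is precisely the hard analytic point: a single Cauchy--Schwarz yields one or the other, not their product, and interpolating via $\min(a,b)\le\sqrt{ab}$ halves the decay exponents, while Assumption \ref{as-3} only guarantees $\sum_\ell \ell\,(E\|X_0-X_{0,\ell}\|^4)^{1/4}<\infty$, which does not imply summability of the square roots. Third, even where lag decay is available, the paper's Lemma \ref{l-lem} shows it arrives with an $h^2/n$ remainder that is only absorbed through the growth restriction $H=o(n^\tau)$ of Assumption \ref{as-1}; your clean ``summable in the lag'' claim silently assumes that term away. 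The skeleton (variance expansion, factorization through independence, use of $L^4$-approximability) is sound, but without resolving these three points the proof does not close.
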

\begin{proof} By the stationarity of $\{(X_i, X_{i,m}), -\infty <i<\infty\}$ and $\{(Y_i, Y_{i,m}), -\infty <i<\infty\}$ we have
\sa
E&\left\{(\tilde{T}_{n,H}-E\tilde{T}_{n,H})-(\bar{T}_{n,H,m}-E\bar{T}_{n,H,m})\right\}^2\\
&\hspace{.4 cm}\leq  (2H+1)\mbox{var}(\tilde{\xi}_{n,0}-\bar{\xi}_{n,0,m})+ 4H\sum_{h=1}^{2H}|\mbox{cov}(\tilde{\xi}_{n,0}-\bar{\xi}_{n,0,m},\tilde{\xi}_{n,h}-\bar{\xi}_{n,h,m})|.
\end{align*}
It follows from Assumptions \ref{as-3}--\ref{as-5} that for every fixed $h$
\beq\label{m-1}
\lim_{m\to \infty}\displaystyle \underset{n\to \infty}{\mbox{\rm limsup }}n^2\mbox{cov}(\tilde{\xi}_{n,0}-\bar{\xi}_{n,0,m},\tilde{\xi}_{n,h}-\bar{\xi}_{n,h,m})=0.
\eeq
For any $M\geq 1$ we write
\sa
\sum_{h=1}^{2H}&|\mbox{cov}(\tilde{\xi}_{n,0}-\bar{\xi}_{n,0,m},\tilde{\xi}_{n,h}-\bar{\xi}_{n,h,m})|\\
&\leq \sum_{h=1}^{M}|\mbox{cov}(\tilde{\xi}_{n,0}-\bar{\xi}_{n,0,m},\tilde{\xi}_{n,h}-\bar{\xi}_{n,h,m})|\\
&\hspace{0.5 cm}+\sum_{h=M+1}^H \left(|\mbox{cov}(\tilde{\xi}_{n,0}, \tilde{\xi}_{n,h})| +|\mbox{cov}(\bar{\xi}_{n,0,m},\bar{\xi}_{n,h,m})|+2|\mbox{cov}(\tilde{\xi}_{n,0},\bar{\xi}_{n,h,m})|\right).
\end{align*}
It follows from \eqref{m-1} that for all $M\geq 1$
\beq\label{m-1/2}
\lim_{m\to \infty}\displaystyle \underset{n\to \infty}{\mbox{\rm limsup }}n^2\sum_{h=1}^{M}|\mbox{cov}(\tilde{\xi}_{n,0}-\bar{\xi}_{n,0,m},\tilde{\xi}_{n,h}-\bar{\xi}_{n,h,m})|=0.
\eeq
We prove in Appendix  \ref{sec-mom} that there is a constant $C$ depending only on $E\|X_0\|^4, E\|Y_0\|^4, J_\X(0),$ $ J_\Y(0), G_\X(0) $  and $G_\Y(0)$ such that for all $h\geq 1$
\beq\label{m-2}
n^2|\mbox{cov}(\tilde{\xi}_{n,0}, \tilde{\xi}_{n,h})|\leq C\left( J_\bX(h)+J_\bY(h)+\frac{h^2}{n}  \right).
\eeq
Hence we get that
\beq\label{m-3}
n^2\sum_{h=M+1}^H |\mbox{cov}(\tilde{\xi}_{n,0}, \tilde{\xi}_{n,h})| \leq C\left(\sum_{h=M+1}^\infty (J_\X(h)+J_\Y(h))+\frac{H^3}{n}
\right)
\eeq
and therefore by Assumptions \ref{as-3}, \ref{as-4} and \ref{as-1}
\beq\label{m-4}
\lim_{M\to \infty}\displaystyle \underset{n\to \infty}{\mbox{\rm limsup }}n^2\sum_{h=M+1}^H |\mbox{cov}(\tilde{\xi}_{n,0}, \tilde{\xi}_{n,h})|=0.
\eeq
 Following the proof of Lemma \ref{l-lem} one can verify that
\beq\label{m-5}
n^2|\mbox{cov}(\tilde{\xi}_{n,0}, \bar{\xi}_{n,h,m})|\leq C\left( J_\X(h)+J_\Y(h)+\frac{h^2}{n}  \right),
\eeq
and
\beq\label{m-6}
n^2|\mbox{cov}(\bar{\xi}_{n,0,m}, \bar{\xi}_{n,h,m})|\leq C\left( J_\X(h)+J_\Y(h)+\frac{h^2}{n}  \right)
\eeq
with the the same $C$ as in \eqref{m-2}. We wish to note that the proofs of \eqref{m-5} and \eqref{m-6} are simpler than that of \eqref{m-2} due to the $m$--dependence of $X_{i,m}$ and $Y_{j,m}$. Using  \eqref{m-5} and \eqref{m-6} we get that
$$
\lim_{M\to \infty}\displaystyle \underset{n\to \infty}{\mbox{\rm limsup }}n^2\sum_{h=M+1}^H \left(|\mbox{cov}(\bar{\xi}_{n,0,m}, \bar{\xi}_{n,h,m})|
+2|\mbox{cov}(\tilde{\xi}_{n,0}, \bar{\xi}_{n,h,m})|
\right)=0,
$$
and therefore Lemma \ref{mapp} follows from \eqref{m-1/2} and \eqref{m-2}.
\end{proof}

According to Lemma \ref{mapp}, it is enough to prove Theorem \ref{main} for $m$ dependent variables.

\begin{lemma}\label{m-cent} If Assumptions \ref{as-2}--\ref{as-1}  and \eqref{zero} hold, then for every $m\geq 1$ we have that
$$
\frac{n\bar{T}_{n,H,m}-(2H+1)\bar{\mu}}{(2H+1)^{1/2}\bar{\sigma}}\;\;\;\stackrel{{\mathcal D}}{\to}\;\;\;{\mathcal N},
$$
where ${\mathcal N}$ denotes a standard normal random variable.
\end{lemma}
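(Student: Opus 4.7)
Following the blocking strategy announced at the start of Section \ref{proofs}, I would prove the $m$-dependent CLT by the Bernstein big-block/small-block technique applied to the bilinear form
\[
n\bar{T}_{n,H,m}=\frac{1}{n}\sum_{h=-H}^{H}\sum_{i,j=1}^{n}\langle X_{i,m},X_{j,m}\rangle\langle Y_{i+h,m},Y_{j+h,m}\rangle,
\]
with $\langle\cdot,\cdot\rangle$ the $L^2[0,1]$ inner product. Under \eqref{xm-def}--\eqref{ym-def} the sequences $\{X_{i,m}\}$ and $\{Y_{i,m}\}$ are mutually independent and each strictly $m$-dependent, while $H=H(n)\to\infty$.

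Pick block widths $L=L(n)\to\infty$ and a gap $\ell=\ell(n)>m+2H$ with $\ell/L\to 0$ and $L=o(n)$; Assumption \ref{as-1} permits, for instance, $\ell\asymp H\log n$ and $L\asymp n^{1/2}$, giving $K=\lfloor n/(L+\ell)\rfloor\to\infty$ big blocks $B_k=\{(k-1)(L+\ell)+1,\ldots,(k-1)(L+\ell)+L\}$ separated by gaps of length at least $\ell$. Because the augmented window $\bigcup_{|h|\le H}(B_k+h)$ has width $L+2H<L+\ell$, the innovations $\{\epsilon_j,\bar\epsilon_j\}$ feeding all $X_{i,m}$ and $Y_{i+h,m}$ with $i\in B_k$, $|h|\le H$, are disjoint across distinct $k$. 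Writing $Z_{ij}(h)=\langle X_{i,m},X_{j,m}\rangle\langle Y_{i+h,m},Y_{j+h,m}\rangle$, the centered within-block contributions
\[
U_k=\frac{1}{n}\sum_{h=-H}^{H}\sum_{i,j\in B_k}\bigl(Z_{ij}(h)-E\,Z_{ij}(h)\bigr)
\]
are thus i.i.d.\ across $k$, and $R_n=n\bar{T}_{n,H,m}-E[n\bar{T}_{n,H,m}]-\sum_{k=1}^K U_k$ collects the cross-block and small-block remainders.

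The argument then rests on three estimates, all relying on the $m$-dependence and the independence of the $X$- and $Y$-sequences. First, \emph{negligibility of the remainder}: since $E\langle X_{i,m},X_{j,m}\rangle=0$ when $|i-j|>m$ (and similarly for $Y$), the surviving cross-block/small-block pairs in $\mathrm{Var}(R_n)$ number only $O\bigl((KL\ell+K\ell^2)(2H+1)\bigr)$, which gives $\mathrm{Var}(R_n)=o((2H+1)\bar\sigma^2)$ because $\ell=o(L)$ and $L=o(n)$. Second, \emph{variance convergence}: $\mathrm{Cov}(Z_{ij}(h),Z_{i'j'}(h'))$ is supported on a bounded set of the differences $i-j$, $i'-j'$ and $(i-i')-(h-h')$, and summing the nonvanishing contributions yields $K\,\mathrm{Var}(U_1)\to(2H+1)\bar\sigma^2$, where $\bar\sigma^2$ is the $m$-truncated version of \eqref{sigdef}. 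Third, a \emph{Lyapunov condition}: by Assumption \ref{as-2} and direct moment bounds for the quadratic form $U_k$ (of the same flavor as \eqref{m-2}, \eqref{m-5} and \eqref{m-6} used in Lemma \ref{mapp}), $K\,E|U_k|^{2+\delta'}/[K\,\mathrm{Var}(U_1)]^{1+\delta'/2}=O(K^{-\delta'/2})\to 0$ for some suitable $\delta'>0$. Combining the three with the Lindeberg--Feller CLT for i.i.d.\ triangular arrays yields the claimed normal limit.

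The main obstacle, in my view, is the exact variance identification in the second step: one must prove that the quadruple covariance sum converges to $(2H+1)\bar\sigma^2$ \emph{uniformly in} $H$, which forces careful tracking of how $\mathrm{Cov}(Z_{ij}(h),Z_{i'j'}(h'))$ aggregates to the $m$-dependent analogue $\bar\sigma^2_{h-h'}$ as a function of the lag difference, while $\sum_k\bar\sigma^2_k=\bar\sigma^2$. The bookkeeping is essentially that of \eqref{m-2}--\eqref{m-6} but at one order higher in the lag index, and the summability in Assumptions \ref{as-3} and \ref{as-4} is exactly what keeps the error uniform in $H$.
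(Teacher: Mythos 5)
There is a genuine gap, and it is fatal to the proposed route rather than a fixable detail. You block the \emph{time axis} into intervals $B_k$ and take as your main term $\sum_k U_k$ with $U_k=\frac1n\sum_h\sum_{i,j\in B_k}(Z_{ij}(h)-EZ_{ij}(h))$, i.e.\ only the diagonal squares $B_k\times B_k$ of the index lattice $\{1,\dots,n\}^2$. But $n\bar T_{n,H,m}$ is a degenerate quadratic form whose variance is \emph{not} concentrated near the diagonal: as the computation in Appendix \ref{sec-var} shows (the region $R_2=\{i,k\le j,\ell\}$, where the dominant factorization is $E[X_{i,m}X_{k,m}]E[X_{j,m}X_{\ell,m}]E[Y_{i,m}Y_{k+h,m}]E[Y_{j,m}Y_{\ell+h,m}]$), the covariance $\mathrm{Cov}(Z_{ij}(h),Z_{i'j'}(h'))$ is nonnegligible whenever $i\approx i'$ and $j\approx j'$ within $m$, with $|i-j|$ unrestricted. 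Hence the variance contribution of a region $A\subset\{1,\dots,n\}^2$ scales like $|A|/n^2$ times $(2H+1)\bar\sigma^2(m)$. With your choices $L\asymp n^{1/2}$, $K\asymp n^{1/2}$, the diagonal squares cover only $KL^2\asymp n^{3/2}=o(n^2)$ lattice points, so $K\,\mathrm{Var}(U_1)=o(H)$ while $\mathrm{Var}(R_n)\sim(2H+1)\bar\sigma^2(m)$: the roles of ``main term'' and ``remainder'' are exactly reversed, and your second estimate ($K\,\mathrm{Var}(U_1)\to(2H+1)\bar\sigma^2$) is false. The error originates in the first estimate, where you infer negligibility of the off-diagonal pairs from $E\langle X_{i,m},X_{j,m}\rangle=0$ for $|i-j|>m$; those terms are centered but far from zero, and their mutual covariances sum to the full order $(2H+1)$.

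The paper's proof avoids this by blocking the \emph{two-dimensional} index set: big squares $B(k,\ell)$ of side $N=H^3\log H$ in the $(i,j)$-plane, separated by strips of width $3H>3m$, so that the $M^2$ block sums $e_{k,\ell}=\sum_{(i,j)\in B(k,\ell)}\sum_h\bar\eta_{i,j}\bar\theta_{i+h,j+h}$ tile essentially all of $\{1,\dots,n\}^2$ (capturing all of the variance, $\mathrm{var}(e_{k,\ell})\ge c_0N^2H$), while the strips $C(k,\ell)$, $D_1(k,\ell)$, $D_2(k,\ell)$ and the leftover set $D$ contribute $o_P(nH^{1/2})$. Lyapunov's condition is then checked for the array $\{e_{k,\ell}\}$ using the moment inequality from Petrov together with Assumption \ref{as-2}. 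If you want to salvage a Bernstein-type argument here, it must be carried out on the square, not the interval; your one-dimensional scheme cannot be repaired by retuning $L$ and $\ell$, since any choice with $L=o(n)$ leaves the off-diagonal mass dominant.
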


\begin{proof}
First  we define  $N=H^3\log H$, and the sets
$B(k,\ell)=\{(i,j): (k-1)(N+3H)+1\leq i\leq kN+3(k-1)H, (\ell-1)(N+3H)+1\leq j\leq N+3(\ell-1)H\}, 1\leq k,\ell\leq M$, where $M$ is defined
as the largest integer satisfying $MN+3(M-1)H\leq n$, i.e.\ $M=\lf n/(N+3H)\rf$.  Let
$$
\bar{\eta}_{i,j}=\int X_{i,m}(t)X_{j,m}(t)dt\quad \mbox{and}\quad \bar{\theta}_{i,j}=\int Y_{i,m}(t)Y_{j,m}(t)dt
$$
and define
$$
e_{k,\ell}=\sum_{(i,j)\in B(k,\ell)}\sum_{h=-H}^H\bar{\eta}_{i,j}\bar{\theta}_{i+h,j+h}.
$$
Since the random variables $\bar{\eta}_{i,j}, \bar{\theta}_{i,j}$ are constructed from  $m$--dependent random functions  we can assume  by letting  $H>m\geq 1$, that $e_{k,\ell}, 1\leq k,\ell\leq M$ are independent and identically distributed. Using Petrov (1995, p.\ 58) we get that
$$
E|e_{k,\ell}-Ee_{k,\ell}|^{2+\delta/2}\leq (2H+1)^{1+\delta/2}\sum_{h=-H}^HE\left| \sum_{(i,j)\in B(k,\ell)}[
\bar{\eta}_{i,j}\bar{\theta}_{i+h,j+h}-E\bar{\eta}_{i,j}\bar{\theta}_{i+h,j+h}]\right|^{2+\delta/2}.
$$
Using the stationarity and the $m$--dependence of the $X_{i,m}$'s and the $Y_{j,m}$'s we obtain that
$$
E\left| \sum_{(i,j)\in B(k,\ell)}[
\bar{\eta}_{i,j}\bar{\theta}_{i+h,j+h}-E\bar{\eta}_{i,j}\bar{\theta}_{i+h,j+h}]\right|^{2+\delta/2}\leq CN^{2+\delta/2},
$$
where $C$ only depends on $m$, $E\|X_0\|^{4+\delta}$ and $E\|Y_0\|^{4+\delta}$.
Also, it follows from Appendix \ref{sec-var} that there is $c_0>0$ such that
$$
\mbox{var}(e_{k,\ell})\geq c_0N^2H,
$$
since $H^3/N\to 0$. Thus by Assumption \ref{as-1}  we get that
\sa
\frac{\left(\sum_{1\leq k ,\ell\leq M}E|e_{k,\ell}-Ee_{k,\ell}|^{2+\delta/2}\right)^{1/(2+\delta/2)}}{\left(\sum_{1\leq k ,\ell\leq M} \mbox{var}(e_{k,\ell})\right)^{1/2} }=O\left(\frac{M^{2/(2+\delta/2)}HN}{MNH^{1/2}}\right)=o(1),
\end{align*}
and so by Lyapunov's theorem (cf.\ Petrov (1995, p.\ 126)) we conclude
$$
\frac{\sum_{1\leq k,\ell \leq M}(e_{k,\ell}-Ee_{k,\ell})}{M\mbox{var}(e_{1,1})^{1/2}}\;\;\stackrel{{\mathcal D}}{\to}\;\;{\mathcal N},
$$
where ${\mathcal N}$ denotes a standard normal random variable.\\
Next we define the small blocks which do not contribute to the limit. Let $C(k,\ell)=\{(i,j): kN+3(k-1)H+1\leq i\leq kN+3kH, \ell N+3(\ell-1)H+1\leq j\leq \ell N+3\ell H\}, 1\leq k,\ell\leq M-1$. Using Petrov (1995, p.\ 58)  and stationarity we conclude that
\beq\label{varH}
\mbox{var}\left( \sum_{(i,j)\in C(k,\ell)}\sum_{h=-H}^H\bar{\eta}_{i,j}\bar{\theta}_{i+h,j+h}   \right)\leq C_1 H^2\mbox{var}\left(\sum_{(i,j)\in C(k,\ell)}
\bar{\eta}_{i,j}\bar{\theta}_{i,j} \right)\leq C_2H^4
\eeq
with some constants $C_1$ and $C_2$. Since $H>m$, by independence we obtain that
\begin{align}\label{varH-1}
\mbox{var}\left(\sum_{1\leq k,\ell<M} \sum_{(i,j)\in C(k,\ell)}\sum_{h=-H}^H\bar{\eta}_{i,j}\bar{\theta}_{i+h,j+h}\right)
&=O\left( {M^2 H^4}\right)\\
&=O(n^2N^{-2}H^4)=o(n^2H)\notag
\end{align}
on account of $H^{3/2}/N\to 0$. Let $D_1(k,\ell)=\{(i,j): (k-1)(N+3H)+1\leq i\leq kN+3(k-1)H, \ell N+3(\ell-1)H+1\leq j\leq \ell N+3\ell H\}, 1\leq k\leq M,1\leq \ell\leq M-1$. We can divide $D_1(k,\ell)$ into $\lf N/H\rf$ blocks of size $3H\times 3H$ and one additional smaller block. The sums of $\sum_{h=-H}^H\bar{\eta}_{i,j}\bar{\theta}_{i+h,j+h}$ over these blocks give 2 dependent variables so by \eqref{varH} we get that
$$
 \mbox{var}\left(\sum_{(i,j)\in D_1(k,\ell)}\sum_{h=-H}^H\bar{\eta}_{i,j}\bar{\theta}_{i+h,j+h}\right)
=O\left(\frac{N}{H}H^4\right).
$$
Thus we have
\begin{align}\label{varH-2}
\sum_{1\leq k,\ell<M}&\sum_{(i,j)\in D_1(k,\ell)}\sum_{h=-H}^H(\bar{\eta}_{i,j}\bar{\theta}_{i+h,j+h} -E\bar{\eta}_{i,j}\bar{\theta}_{i+h,j+h})\\
&=O_P\left(MN^{1/2}H^{3/2}\right)
=O_P\left( nN^{-1/2}H^2\right)=o_P(nH^{1/2})\notag
\end{align}
on account of $H^3/N\to 0$. The same arguments give
\sa
\sum_{1\leq k,\ell<M}\sum_{(i,j)\in D_2(k,\ell)}\sum_{h=-H}^H(\bar{\eta}_{i,j}\bar{\theta}_{i+h,j+h} -E\bar{\eta}_{i,j}\bar{\theta}_{i+h,j+h})
=o_P(nH^{1/2}),
\end{align*}
where $D_2(k,\ell)=\{(i,j): kN+3(k-1)H+1\leq i \leq N+3kH, (\ell-1)(N+3H)+1\leq j \leq \ell N+3(\ell-1)H\}, 1\leq k\leq M-1, 1\leq \ell\leq M.$ Let $ D=\{1\leq i,j \leq n\} \backslash (\cup _{1\leq k,\ell\leq M}C_{k,\ell}\cup_{1\leq k \leq M, 1\leq \ell \leq M-1}D_1(k,\ell) \cup_{1\leq k \leq M-1, 1\leq \ell \leq M}D_2(k,\ell)\cup_{1\leq k, \ell\leq M}B(k,\ell))$. Repeating the proofs of \eqref{varH-1} and \eqref{varH-2} one can verify that
\sa
\sum_{(i,j)\in D}\sum_{h=-H}^H(\bar{\eta}_{i,j}\bar{\theta}_{i+h,j+h} -E\bar{\eta}_{i,j}\bar{\theta}_{i+h,j+h})
=o_P(nH^{1/2}),
\end{align*}
completing the proof of Lemma \ref{m-cent}.
\end{proof}
\medskip

\medskip
\noindent
{\bf {\sc Summary:}} The problem of testing for independence between two finite dimensional
time series has been studied extensively in the literature and most
available methods are based on cross covariance estimators. In this paper
we established the asymptotic normality of a test statistic constructed
from the operator norms of empirical cross covariance operators using
functional data. This asymptotic result supplies a test of the null
hypothesis that two functional time series are independent. The rate at
which the asymptotic result is achieved as the sample size increases was
investigated using a Monte Carlo simulation study and the results showed
that the limit approximation was accurate for moderate to large sample
sizes. Finally we illustrated an application of our limit result to
testing for independence between cumulative intraday return curves
constructed from the closing prices of stocks.

\medskip
\noindent
{\bf {\sc Acknowledgements:}} We are grateful to two anonymous referees for their careful reading of our
paper and for their helpful insights and remarks.

\medskip

\medskip
\appendix
\section{Bound for covariances}\label{sec-mom}\setcounter{equation}{0}
Let
$$
\eta_{\ell,k}=\int X_\ell(t)X_k(t)dt,\quad \theta_{\ell,k}=\int Y_\ell(t)Y_k(t)dt
$$
and
$$
a(\ell-k)=E\eta_{\ell,k},\quad b(\ell-k)=E\theta_{\ell,k}.
$$
Clearly,  $a(x)=a(-x)$ and $b(x)=b(-x)$.
Let
$$
V_n(h)=\mbox{\rm cov}\left( \sum_{i=1}^n\sum_{j=1}^n \eta_{i,j}\theta_{i, j}, \sum_{k=1}^n \sum_{\ell=1}^n\eta_{k,\ell}\theta_{k+h, \ell+h} \right).
$$
In addition to $J_\X, J_\Y, G_\X$ and $G_\Y$ defined in Section \ref{proofs} we also use
$$
L_\X(h)=\sum_{i=h}^\infty (E\|X_0-X_{0,i}\|^2)^{1/2}\; \;\mbox{and}\;\;L_\Y(h)=\sum_{i=h}^\infty (E\|Y_0-Y_{0,i}\|^2)^{1/2}.
$$
\begin{lemma}\label{l-lem} If  Assumptions \ref{as-2}--\ref{as-1} and  \eqref{zero}  hold, then for all $-H\leq  h \leq H$ we have
\sa
\left|V_n(h)\right|\leq C&\bigl\{n^2(J_\X(h)J_\Y(0)+J_\X(0)J_\Y(h))+n(J_\X(0)J_\Y(0)+G_\X(0)J_\Y(0)+J_\X(0)G_\Y(0))\\
&+nh^2(J_\X(0)+J_\Y(0))+J_\X(0)J_\Y(0))
\bigl\},
\end{align*}
where $C$ only depends on $E\|X_0\|^4$ and $E\|Y_0\|^4$.
\end{lemma}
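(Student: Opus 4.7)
\emph{Step 1: Decomposition via independence.} Under Assumption~\ref{as-5}, the $\sigma$-algebras generated by $\X$ and $\Y$ are independent, so the $\eta$'s and the $\theta$'s are independent of one another. Writing $A(i,j,k,\ell)=\mbox{cov}(\eta_{i,j},\eta_{k,\ell})$ and $B_h(i,j,k,\ell)=\mbox{cov}(\theta_{i,j},\theta_{k+h,\ell+h})$, using $E[\eta_{i,j}\theta_{i,j}]=a(i-j)b(i-j)$ and $b((k+h)-(\ell+h))=b(k-\ell)$, a direct expansion gives
$$
\mbox{cov}(\eta_{i,j}\theta_{i,j},\,\eta_{k,\ell}\theta_{k+h,\ell+h})=A\,B_h+A\,b(i-j)b(k-\ell)+a(i-j)a(k-\ell)\,B_h,
$$
so that $V_n(h)=S_1+S_2+S_3$ with $S_1=\sum AB_h$, $S_2=\sum A\,b(i-j)b(k-\ell)$, $S_3=\sum a(i-j)a(k-\ell)\,B_h$, the sums running over $1\le i,j,k,\ell\le n$.

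\emph{Step 2: Four-point covariance bound.} I would prove that for indices $i_1\le i_2\le i_3\le i_4$,
$$
|\mbox{cov}(\eta_{i_1,i_2},\eta_{i_3,i_4})|\le C\,(E\|X_0-X_{0,g}\|^{4})^{1/4}(E\|X_0\|^4)^{3/4},
$$
where $g$ is the ``separating'' gap among the four indices (i.e.\ the maximum of $i_3-i_2$ over admissible pair splits), with an analogous bound for $\theta$ in which the shift $h$ enlarges the effective gap. The argument is to replace $X_{i_3}$ by its $m$-dependent coupling $X_{i_3,g}$ defined in \eqref{xm-def}: if $g\le i_3-i_2$ then $X_{i_3,g}$ is independent of $(X_{i_1},X_{i_2})$, which makes the corresponding piece of the covariance vanish, and the remaining error term is controlled by H\"older's inequality together with the $L^{4+\delta}$ moment bound of Assumption~\ref{as-2}.

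\emph{Step 3: Bounding the three sums.} For $S_3$, use summability of $a(\cdot)$ to reduce the $(j,\ell)$-sum to a bounded constant and reduce to a sum over $(i,k)$; by stationarity, the $\theta$-covariance depends only on the differences $j-i,\ell-k,k+h-i$. Split the $(i,k)$-sum at $|k-i|=h$: in the regime $|k-i|>h$ the gap between the $\theta$-indices is $|k-i|$, and Step~2 produces a decay factor whose summation gives $n^2 J_\X(0)J_\Y(h)$; in the regime $|k-i|\le h$ one uses uniform fourth-moment bounds, producing the $nh^2 J_\X(0)$ contribution together with the $n\,G_\Y(0)J_\X(0)$ correction coming from the coupling error at boundary gaps. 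A symmetric treatment of $S_2$, now splitting based on the location of $(i,j)$ relative to $(k,\ell)$ and taking advantage that the effective $X$-gap is forced to exceed $h$ in the ``far'' regime (because the $\theta$-factors $b(i-j)b(k-\ell)$ are summable only when the pairs are close, while $A$ must be estimated with a gap of at least $h$ to match the shift that is present in the surrounding $S_1$ decomposition), gives the $n^2J_\X(h)J_\Y(0)+nh^2J_\Y(0)+n\,G_\X(0)J_\Y(0)$ contributions. For $S_1$, one exploits the two simultaneous decays of $A$ and $B_h$: applying Step~2 on either side whenever one pair of indices is separated by more than $h/2$, the resulting bound is of lower order and absorbs into the $n\,J_\X(0)J_\Y(0)$ and constant $J_\X(0)J_\Y(0)$ terms.

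\emph{Main obstacle.} The principal difficulty is the combinatorial bookkeeping: for each of $S_1,S_2,S_3$ one must decompose $[1,n]^4$ into regimes according to which pairs of the indices $\{i,j,k,\ell,k{+}h,\ell{+}h\}$ are close, and then apply the Bernoulli-shift bound from Step~2 with the correct separating gap so that $J_\X(h)$ and $J_\Y(h)$ (rather than weaker $J(0)$-factors) appear at the $n^2$ level. Extracting the precise $h$-dependence, in particular using the $h$-shift in the $\theta$'s to force a gap inside the $A$-factor of $S_2$, is the subtlest point and where the $G_\X,G_\Y$ corrections arise from boundary-gap error terms handled by Cauchy--Schwarz.
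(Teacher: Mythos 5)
Your overall toolkit (factorizing the fourth moments via Assumption \ref{as-5}, coupling with the $m$-dependent approximations of \eqref{xm-def}, Cauchy--Schwarz, and a case analysis over index configurations) is the same as the paper's, and your Step~1 identity $\mbox{cov}(\eta_{i,j}\theta_{i,j},\eta_{k,\ell}\theta_{k+h,\ell+h})=AB_h+A\,b(i-j)b(k-\ell)+a(i-j)a(k-\ell)B_h$ is correct; the paper uses exactly this splitting, but only locally, in the region where all relevant gaps are below $h$ (its set $R_{1,3}$, producing the $Q_{n,1},Q_{n,2},Q_{n,3}$ terms and hence the $nh^2$ contribution). Elsewhere the paper first reduces to three free indices by stationarity and then bounds the raw moments $|E\eta_{0,j}\eta_{k,\ell}|$, $|E\theta_{0,j}\theta_{k+h,\ell+h}|$ and the products $|a(j)a(\ell-k)b(j)b(\ell-k)|$ separately, never needing a four-point \emph{covariance} bound outside the near-diagonal region.

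This points to the genuine gap in your plan: your Step~2 estimate holds only when the two pairs entering the covariance are non-nested and non-interleaved, and Step~3 silently applies it where it fails. Concretely, for the interleaved configuration $(i,j,k,\ell)=(r,r+d,r+1,r+d+1)$ one has $E\eta_{r,r+d}\eta_{r+1,r+d+1}\to\intt (E[X_0(t)X_1(s)])^2\,dtds>0$ in general while $a(d)^2\to0$, so $A$ does not decay in the separating gap $d$ at all; the same holds for $B_h$. There are order $n^2$ such configurations, so your claim that $S_1$ is ``of lower order'' is false as stated --- the interleaved part of $S_1$ is genuinely $\Theta(n^2)$, and the required $h$-decay of its coefficient must be extracted from the \emph{within}-pair lag of the $\theta$'s (the mechanism behind the paper's bounds \eqref{a-bau} and \eqref{et-1} in the regions $R_2,R_3$, where the $G$-terms then arise via Abel summation), not from a between-pair separation. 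Relatedly, your $S_2$ argument is internally inconsistent: $S_2=\sum\mbox{cov}(\eta_{i,j},\eta_{k,\ell})\,b(i-j)b(k-\ell)$ contains no $h$ whatsoever, since the shift cancels inside $b((k+h)-(\ell+h))=b(k-\ell)$, so it cannot yield the $h$-decaying leading term $n^2J_\X(h)J_\Y(0)$ by ``matching the shift present in the surrounding $S_1$ decomposition''; you would instead have to show $S_2=O(n)$ outright. Similarly, the relevant cluster separation in $S_3$ is $|i-(k+h)|$, not $|k-i|$. These are the points where the estimate is won or lost, so as written the proposal does not yet constitute a proof.
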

\begin{proof} It is easy to see that by Assumptions \ref{as-3}--\ref{as-5} we have
\begin{align*}
\left|V_{n,h}\right|&=\left|\sum_{i=1}^n\sum_{j=1}^n\sum_{k=1}^n\sum_{\ell=1}^n\biggl(E\eta_{i,j}\eta_{k,\ell}E\theta_{i,j}\theta_{k+h,\ell+h}
-a(i-j)a(k-\ell)b(i-j)b(k-\ell)\biggl)\right|\\
&\leq n\sum_{k=1}^n\sum_{\ell=1}^n\left|E\eta_{0,0}\eta_{k,\ell}E\theta_{0,0}\theta_{k+h,\ell+h}
-a(0)a(k-\ell)b(0)b(k-\ell)\right|\\
&+2n \sum_{j=1}^n\sum_{k=1}^n\sum_{\ell=1}^n\left| E\eta_{0,j}\eta_{k,\ell}E\theta_{0,j}\theta_{k+h,\ell+h}-a(j)a(k-\ell)b(j)k(k-\ell)
\right|\\
&\leq 2n \sum_{j=0}^n\sum_{k=1}^n\sum_{\ell=1}^n\left| E\eta_{0,j}\eta_{k,\ell}E\theta_{0,j}\theta_{k+h,\ell+h}-a(j)a(k-\ell)b(j)k(k-\ell)
\right|.
\end{align*}
We proceed by bounding the sums in $j,k,\ell$ over several subsets of the indices whose union covers all possible combinations. \\
Case I. Let $R_1=\{(j,k,\ell): 0\leq j\leq k\leq \ell\leq n\}, R_{1,1}=R_{1,1}(h)=\{(j,k,\ell)\in R_1: j\geq h\}$, $ R_{1,2}=R_{1,2}(h)=\{(j,k,\ell)\in R_1: \ell-k\geq h\}$ and $R_{1,3}=R_{1,3}(h)=R_1 \backslash (R_{1,1}(h)\cup R_{1,2}(h))$. Using Assumption \ref{as-3} and the Cauchy--Schwarz inequality we get for all $j\geq 0$
\begin{align}\label{a-bau}
|a(j)|
&=\left|E\int X_0(t)X_j(t)dt\right|\\
&=\left|\int EX_0(t)((X_j(t)-X_{j,j}(t))dt\right|\notag\\
&\leq \int (EX_0^2(t))^{1/2}(E(X_j(t)-X_{j,j}(t)))^{1/2}dt\notag\\
&\leq \left(  \int EX^2_0(t)dt\int E(X_j(t)-X_{j,j}(t))^2dt      \right)^{1/2}\notag\\
&=(E\|X_j\|^2)^{1/2}(E\|X_j-X_{j,j}\|^2)^{1/2}\notag\\
&=(E\|X_0\|^2)^{1/2}(E\|X_0-X_{0,j}\|^2)^{1/2},\notag
\end{align}
since by construction $X_0$ and $X_{j,j}$ are independent with zero mean. Similarly, for all $j\geq 0$ we have
\begin{align}\label{b-bau}
|b(j)|\leq (E\|Y_0\|^2)^{1/2}(E\|Y_0-Y_{0,j}\|^2)^{1/2}.
\end{align}
By bounds in \eqref{a-bau} and \eqref{b-bau} we conclude
\begin{align}\label{ab-bau}
\frac{1}{n}\sum_{(j,k,\ell)\in R_{1,1}}|a(j)a(\ell-k)b(j)b(\ell-k)|
\leq 4 (E\|X_0\|^2E\|Y_0\|^2)^{3/2}L_\X(h)L_\Y(0).
\end{align}
Assumption \ref{as-3}  implies
\begin{align*}
E\eta_{0,j}\eta_{k,\ell}&= \intt EX_0(t)X_j(t)X_k(s)X_\ell(s)dtds   \\
&= \intt EX_0(t)(X_j(t)X_k(s)X_\ell(s) -  X_{j,j}(t)X_{k,j}(s)X_{\ell,j}(s)    )dtds   \\
&=E\int X_0(t)(X_j(t)-X_{j,j}(t))dt\int X_{k}(s)X_\ell(s)ds\\
&\hspace{.5 cm}+ E\int X_0(t)X_{j,j}(t)dt \int X_k(s)(X_\ell(s)-X_{\ell,j}(s))ds\\
&\hspace{.5 cm}+ E\int X_0(t)X_{j,j}(t)dt\int X_{\ell, j}(s)(X_k(s)-X_{k,j}(s))ds.
\end{align*}
The Cauchy--Schwarz inequality and the stationarity of the $X_\ell$'s gives as in \eqref{a-bau} that
$$
\left|E\int X_0(t)(X_j(t)-X_{j,j}(t))dt\int X_{k}(s)X_\ell(s)ds\right|\leq (E\|X_0\|^4)^{3/4}(E\|X_0-X_{0,j}\|^4)^{1/4}
$$
and similarly
 $$
\left|  E\int X_0(t)X_{j,j}(t)dt \int X_k(s)(X_\ell(s)-X_{\ell,j}(s))ds          \right|\leq (E\|X_0\|^4)^{3/4}(E\|X_0-X_{0,j}\|^4)^{1/4},
$$
$$
\left|  E\int X_0(t)X_{j,j}(t)dt \int X_{\ell, j}(s)(X_k(s)-X_{k,j}(s))ds          \right|\leq (E\|X_0\|^4)^{3/4}(E\|X_0-X_{0,j}\|^4)^{1/4}.
$$
Thus we get
\beq\label{et-1}
|E\eta_{0,j}\eta_{k,\ell}|\leq 3 (E\|X_0\|^4)^{3/4}(E\|X_0-X_{0,j}\|^4)^{1/4}
\eeq
Repeating the arguments above, one can easily verify that
\beq\label{et-2}
| E\theta_{0,j}\theta_{k+h, \ell+h} | \leq 3(E\|Y_0\|^4)^{3/4}(E\|Y_0-Y_{0,\ell-k}\|^4)^{1/4}.
\eeq
Combining \eqref{et-1} and \eqref{et-2} we get
\begin{align}\label{r-11}
&\sum_{(j,k,\ell)\in R_{1,1}}  | E\eta_{0,j}\eta_{k,\ell}  E\theta_{0,j}\theta_{k+h, \ell+h}|
\leq 9 n(E\|X_0\|^4)^{3/4}(E\|Y_0\|^4)^{3/4}J_\X(h)J_\Y(0).
\end{align}
Minor modifications of the proofs of \eqref{ab-bau} and \eqref{r-11} lead to
\begin{align}\label{ab-bau-2}
&\sum_{(j,k,\ell)\in R_{1,2}}|a(j)a(\ell-k)b(j)b(\ell-k)|
\leq nE\|X_0\|^2E\|Y_0\|^2 L_\X(0)L_\Y(h)
\end{align}
and
\begin{align}\label{r-11-2}
&\sum_{(j,k,\ell)\in R_{1,2}}  | E\eta_{0,j}\eta_{k,\ell}  E\theta_{0,j}\theta_{k+h, \ell+h}|
\leq 9 n(E\|X_0\|^4)^{3/4}(E\|Y_0\|^4)^{3/4}L_\X(0)L_\Y(h).
\end{align}
Next we develop bounds for the sum when the indices are in $R_{1,3}$. We define the random functions
${X}^*_{k,k-j}=g_\bX(\epsilon_k, \epsilon_{k-1}, \ldots ,\epsilon_{j+1}, \epsilon^*_j, \epsilon^*_{j-1}\ldots )$ and ${X}^*_{\ell, \ell-j}=g_\bX(\epsilon_\ell, \epsilon_{\ell-1}, \ldots ,\epsilon_{j+1}, \epsilon_j^*,$ $ \epsilon_{j-1}^*, \ldots )$, where $\epsilon_i^*, -\infty <i<\infty$ are iid copies of $\epsilon_0$, independent of $\epsilon_i, -\infty<i<\infty$. It is clear that $(X_k, X_\ell)$ and $({X}^*_{k,k-j}, {X}^*_{\ell, \ell-j})$ have the same distribution and are independent of $(X_0, X_j)$. (Note that ${X}^*_{k,k-j}$ and ${X}^*_{\ell,\ell-j}$ are defined by using the same $\epsilon^*_j$'s, making them different from $X_{k,k-j}$ and $X_{\ell, \ell-j}$ of \eqref{xm-def}).  By Assumption \ref{as-3}  and the Cauchy--Schwarz inequality we have
\begin{align}\label{c-bau}
|E\eta_{0,j}&\eta_{k,\ell}-a(j)a(\ell-k)|\\
&\leq \intt E|X_0(t)X_j(s)(X_{k}(s)X_{\ell}(s)-{X}^*_{k,k-j}(s){X}^*_{\ell, \ell-j}(s))|dtds \notag\\
&\leq (E\|X_0\|^4)^{3/4}((E\|X_k-X^*_{k,k-j}\|^4)^{1/4}+(E\|X_{\ell}-X^*_{\ell,\ell-j}\|^4)^{1/4})\notag\\
&=(E\|X_0\|^4)^{3/4}((E\|X_0-X_{0,k-j}\|^4)^{1/4}+(E\|X_{0}-X_{0,\ell-j}\|^4)^{1/4})\notag
\end{align}
and similarly
 \begin{align*}
|E&\theta_{0,j}\theta_{k+h,\ell+h}-b(j)b(\ell-k)|\\
&\leq (E\|Y_0\|^4)^{3/4}((E\|Y_{0}-Y_{0,k+h-j}\|^4)^{1/4}+(E\|Y_{0}-Y_{0,\ell+h-j}\|^4)^{1/4}).
\end{align*}
Clearly,
\begin{align*}
\sum_{(j,k,\ell)\in R_{1,3}}|E\eta_{0,j}\eta_{k,\ell}E\theta_{0,j}\theta_{k+h,\ell+h}-a(j)a(\ell-k)b(j)b(\ell-k)|
\leq Q_{n,1}+Q_{n,2}+Q_{n,3}
\end{align*}
with
\begin{align*}
Q_{n,1}=\sum_{(j,k,\ell)\in R_{1,3}}|E\eta_{0,j}\eta_{k,\ell}-a(j)a(\ell-k)||b(j)b(\ell-k)|,
\end{align*}

\begin{align*}
Q_{n,2}=\sum_{(j,k,\ell)\in R_{1,3}}|E\theta_{0,j}\theta_{k+h,\ell+h}-b(j)b(\ell-k)||a(j)a(\ell-k)|
\end{align*}
\begin{align*}
Q_{n,3}=\sum_{(j,k,\ell)\in R_{1,3}}|E\eta_{0,j}\eta_{k,\ell}-a(j)a(\ell-k)||E\theta_{0,j}\theta_{k+h,\ell+h}-b(j)b(\ell-k)|.
\end{align*}
Applying \eqref{b-bau} and \eqref{c-bau} we get that
\begin{align*}
Q_{n,1}\leq &(E\|X_0\|^4)^{1/4}E\|Y_0\|^2\biggl(\sum_{(j,k,\ell)\in R_{1,3}}(E\|X_0-X_{0,k-j}\|^4)^{1/4}(E\|Y_0-Y_{0,j}\|^2E\|Y_0-Y_{0,\ell-k}\|^2)^{1/2}\\
& \hspace{1cm}+\sum_{(j,k,\ell)\in R_{1,3}}(E\|X_0-X_{0,\ell-j}\|^4)^{1/4}(E\|Y_0-Y_{0,j}\|^2E\|Y_0-Y_{0,\ell-k}\|^2)^{1/2}\biggl).
\end{align*}
Using the definition of $R_{1,3}$ we conclude
\begin{align*}
&\sum_{(j,k,\ell)\in R_{1,3}}(E\|X_0-X_{0,k-j}\|^4)^{1/4}(E\|Y_0-Y_{0,j}\|^2E\|Y_0-Y_{0,\ell-k}\|^2)^{1/2}\\
&=\sum_{j=1}^h(E\|Y_0-Y_{0,j}\|^2)^{1/2}
\sum_{k=j}^n(\|X_0-X_{0,k-j}\|^4)^{1/4}\sum_{\ell=k}^{k+h}(E\|Y_0-Y_{0,\ell-k}\|^2)^{1/2}\\
&\leq h^2 \max_{1\leq r \leq h}E\|Y_0-Y_{0,r}\|^2J_\X(0)\\
&\leq 2h^2 E\|Y_0\|^2J_\X(0).
\end{align*}
It follows similarly
\begin{align*}
\sum_{(j,k,\ell)\in R_{1,3}}(E\|X_0-X_{0,\ell-j}\|^4)^{1/4}(E\|Y_0-Y_{0,j}\|^2E\|Y_0-Y_{0,\ell-k}\|^2)^{1/2}
\leq 2h^2 E\|Y_0\|^2J_\X(0),
\end{align*}
implying
\begin{align*}
Q_{n,1}\leq 4h^2 (E\|X_0\|^4)^{1/4}(E\|Y_0\|^2)^2J_\X(0).
\end{align*}
Using nearly identical arguments we obtain that
\begin{align*}
Q_{n,2}\leq 4h^2 (E\|Y_0\|^4)^{1/4}(E\|X_0\|^2)^2J_\Y(0)
\end{align*}
and
\begin{align*}
Q_{n,3}\leq 8h^2 (E\|X_0\|^4)^{3/4}E\|Y_0\|^4J_\X(0).
\end{align*}
Case II. Let $R_2=\{(j,k,\ell): 0\leq k \leq \ell\leq j\}, R_{2,1}=R_{2,1}(h)=\{(j,k,\ell)\in R_2 : 0\leq k +h\leq \ell+h\leq j\}, R_{2,2}=R_{2,2}(h)=\{(j,k,\ell)\in R_2 : 0\leq k+h\leq j \leq \ell+h \}$ and$ R_{2,3}=R_{2,3}(h)=\{(j,k,\ell)\in R_2 : 0\leq j\leq k+h\leq \ell+h\}.$ For
$(j,k,\ell)\in R_{2,1}$ we write with the help of Assumption \ref{as-3}
\begin{align*}
|E\eta_{0,j}\eta_{k,\ell}|&=\left|E\intt X_0(t)X_k(s)X_\ell(s)X_j(t)dtds\right|\\
&=\left|E\intt X_0(t)X_k(s)X_\ell(s)(X_j(t)-X_{j,j-\ell}(t)dtds)\right|\\
&\leq ( E\|X_0\|^4)^{3/4}(E\|X_0-X_{0,j-\ell}\|^4)^{1/4}
\end{align*}
and similarly
\begin{align*}
|E\theta_{0,j}\theta_{k+h, \ell+h}|\leq (E\|Y_0\|^4)^{3/4}(E\|Y_0-Y_{0,k+h}\|^4)^{1/4}.
\end{align*}
Thus we get
\begin{align*}
\sum_{(j,k,\ell)\in R_{2,1}}|E\eta_{0,j}\eta_{k,\ell}E\theta_{0,j}\theta_{k+h}\theta_{\ell +h}|
\leq n
 ( E\|X_0\|^4E\|Y_0\|^4)^{3/4}J_\X(h)J_\Y(h).
\end{align*}
For $(j,k,\ell)\in R_{2,2}$ similar arguments yield
\begin{align*}
\sum_{(j,k,\ell)\in R_{2,2}}|E\eta_{0,j}\eta_{k,\ell}E\theta_{0,j}\theta_{k+h}\theta_{\ell +h}|
\leq n
 ( E\|X_0\|^4E\|Y_0\|^4)^{3/4}J_\X(h)J_\Y(h).
\end{align*}
If $(j,k,\ell)\in R_{2,3}$, then we get
\sa
|E\eta_{0,j}\eta_{k,\ell}|&=\left|\intt X_0(t)(X_k(s)X_\ell(s)X_j(t)-X_{k,k}(s)X_{\ell, k}(s)X_{j,k}(t))dtds\right|\\
&\leq 3(E\|X_0\|^4)^{3/4}(E\|X_0-X_{0,k}\|^4)^{1/4}
\end{align*}
and
\sa
|E\theta_{0,j}\theta_{k+h,\ell+h}|\leq 3(E\|Y_0\|^4)^{3/4}(E\|Y_0-Y_{0,j}\|^4)^{1/4}
\end{align*}
resulting in
\sa
\sum_{(j,k,\ell)\in R_{2,3}}&|E\eta_{0,j}\eta_{k,\ell}E\theta_{0,j}\theta_{k+h, \ell+h}|\\
&\leq 9(E\|X_0\|^4E\|Y_0\|^4)^{3/4}\sum_{(j,k,\ell)\in R_{2,3}}(E\|X_0-X_{0,k}\|^4)^{1/4}(E\|Y_0-Y_{0,j}\|^4)^{1/4}\\
&\leq 9n(E\|X_0\|^4E\|Y_0\|^4)^{3/4} J_\X(0)J_\Y(0) \frac{h}{n},
\end{align*}
since due to the restriction $j-h\leq \ell\leq j$ on $R_{2,3}$, there are only $h$ choices for $\ell$ for any $j$. Next we consider the sum of the expected values assuming that $(j,k,\ell)\in R_2$. Using \eqref{a-bau} and \eqref{b-bau} we conclude
\sa
&\sum_{(j,k,\ell)\in R_2}|a(j)a(\ell-k)b(j)b(\ell-k)|\\
&\leq E\|X_0\|^2E\|Y_0\|^2\sum_{(j,k,\ell)\in R_2}(E\|X_0-X_{0,j}\|^2 E\|X_0-X_{0,\ell-k}\|^2E\|Y_0-Y_{0,j}\|^2E\|Y_0-Y_{0,\ell-k}\|^2)^{1/2}
\end{align*}
and
\sa
&\sum_{(j,k,\ell)\in R_2}(E\|X_0-X_{0,j}\|^2 E\|X_0-X_{0,\ell-k}\|^2E\|Y_0-Y_{0,j}\|^2E\|Y_0-Y_{0,\ell-k}\|^2)^{1/2}\\
&\leq \sum_{0\leq k\leq \ell\leq N}(E\|X_0-X_{0,\ell-k}\|^2E\|Y_0-Y_{0,\ell-k}\|^2)^{1/2}\left(\sum_{j\geq \ell} (E\|X_0-X_{0,j}\|^2 E\|Y_0-Y_{0,j}\|^2)^{1/2} \right)\\
&=\sum_{\ell=0}^{ N}\left( \sum_{m=0}^\ell  (E\|X_0-X_{0,m}\|^2E\|Y_0-Y_{0,m}\|^2)^{1/2} \right)\left(\sum_{j\geq \ell} (E\|X_0-X_{0,j}\|^2 E\|Y_0-Y_{0,j}\|^2)^{1/2} \right)\\
&\leq  \sum_{m=0}^\infty  (E\|X_0-X_{0,m}\|^2E\|Y_0-Y_{0,m}\|^2)^{1/2} \sum_{\ell=0}^\infty \sum_{j=\ell} ^\infty(E\|X_0-X_{0,j}\|^2 E\|Y_0-Y_{0,j}\|^2)^{1/2}\\
&\leq 2(E\|Y_0\|^2)^{1/2}L_\X(0)  \sum_{\ell=0}^\infty \ell(E\|X_0-X_{0,\ell}\|^2E\|Y_0-Y_{0,\ell}\|^2)^{1/2}\\
&\leq 4E\|Y_0\|^2L_\X(0)\sum_{\ell=0}^\infty \ell(E\|X_0-X_{0,\ell}\|^2)^{1/2}
\end{align*}
by Abel's summation formula.
Finally we consider the case when the summation is over $R_3=\{0\leq k \leq j \leq \ell\leq n-1  \}$. Let $R_{3,1}=\{(j,k,\ell)\in R_3:  0\leq k+h\leq j \leq \ell+h\}$ and $ R_{3,2}=\{(j,k,\ell)\in R_3:  0\leq j \leq k+h\leq \ell+h\}.$ If $(j,k,\ell)\in R_{3,1}$ we have
\begin{align}\label{3-1/2}
|E\eta_{0,j}\eta_{k,\ell}|&=|E\intt X_0(t)X_k(s)X_j(t)X_\ell(s)dtds|\\
&=E\intt |X_0(t)X_k(s)X_j(t)(X_\ell(s)-X_{\ell,\ell-j}(s))|dtds\notag\\
&\leq (E\|X_0\|^4)^{3/4}(E\|X_0-X_{0,\ell-j}\|^4)^{1/4}\notag
\end{align}
and similarly
\sa
|E\theta_{0,j}\theta_{k+h,\ell+h}|&=|E\intt Y_0(t)(Y_{k+h}(s)Y_j(t)Y_{\ell+h}(s)-Y_{k+h,k+h}(s)Y_{j,k+h}(t)Y_{\ell+h,k+h}(s))dtds|\\
&\leq 3 (E\|Y_0\|^4)^{3/4}(E\|Y_0-Y_{0,k+h}\|^4)^{1/4}
\end{align*}
resulting in
\sa
\frac{1}{n}\sum_{(j,k,\ell)\in R_{3,1}} |E\eta_{0,j}\eta_{k,\ell}E\theta_{k+h}\theta_{\ell+h}|\leq 3(E\|X_0\|^4E\|X_0\|^4)^{3/4}J_\X(0)J_\Y(h).
\end{align*}
In case of $(j,k,\ell)\in R_{3,2}$ the upper bound in \eqref{3-1/2} still holds but for the $\theta$'s we have
\sa
|E\theta_{0,j}\theta_{k+h,\ell+h}|&=|E\intt Y_0(t)(Y_j(t)Y_{k+h}(s)Y_{\ell+h}(s)-Y_{j,j}(t)Y_{k+h,j}(s)Y_{\ell+h,j}(s))dtds|\\
&\leq 3 (E\|Y_0\|^4)^{3/4}(E\|Y_0-Y_{0,j}\|^4)^{1/4}.
\end{align*}
Since for all $(j,k,\ell)\in R_{3,2}$ we have that $j-h\leq k \leq j$ we conclude
\sa
&\sum_{(j,k,\ell)\in R_{3,2}} |E\eta_{0,j}\eta_{k+h}E\theta_{0,j}\theta_{k+h,\ell+h}|\\
&\leq 3 (E\|X_0\|^4E\|Y_0\|^4)^{3/4}\sum_{(j,k,\ell)\in R_{3,2}}
(E\|X_0-X_{0,\ell-j}\|^4E\|Y_0-Y_{0,j}\|^4)^{1/4}\\
&\leq 3h (E\|X_0\|^4E\|Y_0\|^4)^{3/4}\sum_{(j,k,\ell)\in R_{3,2}} \sum_{0\leq j \leq \ell\leq n}(E\|X_0-X_{0,\ell-j}\|^4E\|Y_0-Y_{0,j}\|^4)^{1/4}\\
&\leq 3h (E\|X_0\|^4E\|Y_0\|^4)^{3/4}{J}_{\X}(0)J_{\Y}(0).
\end{align*}
Furthermore, by \eqref{a-bau} and \eqref{b-bau} we get
\sa
&\sum_{(j,k,\ell)\in R_{3}}|a(j)a(\ell-k)b(j)b(\ell-k)|\\
&\leq E\|X_0\|^2E\|Y_0\|^2\sum_{0\leq k\leq j \leq \ell}
(E\|X_0-X_{0,j}\|^2E\|X_0-X_{0,\ell-k}\|^2E\|Y_0-Y_{0,j}\|^2E\|Y_0-Y_{0,\ell-k}\|^2)^{1/2}\\
&\leq E\|X_0\|^2E\|Y_0\|^2\sum_{0\leq k \leq \ell}(E\|X_0-X_{0,\ell-k}\|^2E\|Y_0-Y_{0,\ell-k}\|^2)^{1/2}\\
&\hspace{2 cm}\times \sum_{j=k}^\infty(E\|X_0-X_{0,j}\|^2E\|Y_0-Y_{0,j}\|^2)^{1/2}\\
&\leq 2E\|X_0\|^2E\|Y_0\|^3 L_\X(0)\sum_{k=0}^\infty \sum_{j=k}^\infty(E\|X_0-X_{0,j}\|^2E\|Y_0-Y_{0,j}\|^2)^{1/2}\\
&\leq 4E\|X_0\|^3E\|Y_0\|^3 L_\X(0)\sum_{k=0}^\infty k (E\|Y_0-Y_{0,k}\|^2)^{1/2}.
\end{align*}
This completes the proof of the lemma.
\end{proof}
\medskip
\section{Asymptotics for means and variances of $\tilde{T}_{n,H}$ and $\bar{T}_{n,H,m}$}\label{sec-var}\setcounter{equation}{0}

We recall that $\bar{T}_{n,H,m}$ is determined by the $m$--dependent random functions $\{X_{i,m}, -\infty<i<\infty\}$ and $\{Y_{i,m}, -\infty<i<\infty\}$ defined in \eqref{xm-def} and \eqref{ym-def}, respectively.

\begin{lemma}\label{l-var} If Assumptions \ref{as-2}--\ref{as-1}  and \eqref{zero} hold, then for every $m\geq 1$ we have that
\beq\label{mean-1}
\frac{E\bar{T}_{n,H,m}}{2H+1}=\bar{\mu}(m)+O(1/n),
\eeq
$$
\bar{\mu}(m)=\sum_{\ell=-m}^m\int E[X_{0,m}(t) X_{\ell,m}(t)]dt\int E[Y_{0,m}(s)Y_{\ell,m}(s)]ds,
$$
\beq\label{mean-2}
\frac{E\tilde{T}_{n,H}}{2H+1}={\mu}+O(1/n),
\eeq
where $\mu$ is defined in \eqref{a-def}, and
\beq\label{var-1}
\lim_{n\to \infty}\frac{\mbox{{\rm var}}(\bar{T}_{n,H,m})}{2H+1}=\bar{\sigma}^2(m),
\eeq
$$
\bar{\sigma}^2(m)=\sum_{h=-m}^m \bar{\sigma}_h^2(m),
$$
$$
\bar{\sigma}^2_h(m)=2\int\!\!\!\cdots\!\!\!\int\left(\sum_{\ell=-m}^mE[X_{0,m}(t) X_{\ell,m}(s)]E[Y_{0,m}(u) Y_{\ell+h,m}(v)]\right)^2dtdsdudv,
$$
\beq\label{var-2}
\lim_{m\to \infty}\bar{\sigma}^2(m)=\sigma^2,
\eeq
where $\sigma^2$ is defined in \eqref{sigdef}.
\end{lemma}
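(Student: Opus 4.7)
The plan is to treat the four assertions in turn, with the bulk of the effort concentrated on the variance statement \eqref{var-1}. Throughout I shall exploit two structural features: the independence of $\X$ and $\Y$, which factorizes every moment of a mixed product $X\cdots Y$ into an $X$--moment times a $Y$--moment, and, for the $m$--dependent claims, the vanishing of $\alpha_m(\ell):=\int E[X_{0,m}(t)X_{\ell,m}(t)]dt$ (and of $\beta_m(\ell)$, defined analogously) whenever $|\ell|>m$.

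For the two mean formulas I would first expand
\sa
E[\bar{C}_{n,h,m}^2(t,s)] = \frac{1}{n^2}\sum_{i,j=1}^n E[X_{i,m}(t)X_{j,m}(t)]\,E[Y_{i+h,m}(s)Y_{j+h,m}(s)],
\se
integrate in $(t,s)$, collapse the double sum via $\ell=j-i$, and invoke stationarity to obtain an expression of the form $nE\bar{\xi}_{n,h,m}=\sum_{\ell=-(n-1)}^{n-1}(1-|\ell|/n)\alpha_m(\ell)\beta_m(\ell)$, which is independent of $h$. The $m$--dependence truncates the series at $|\ell|\le m$ and the factor $|\ell|/n$ contributes the $O(1/n)$ remainder, giving \eqref{mean-1} after summation over $-H\le h\le H$. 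The algebraic identity behind \eqref{mean-2} is the same, but the series is now infinite; absolute convergence follows from the estimate $|\gamma_\X(\ell)|\le (E\|X_0\|^2)^{1/2}(E\|X_0-X_{0,\ell}\|^2)^{1/2}$, derived exactly as in \eqref{a-bau}, combined with $(E\|\cdot\|^2)^{1/2}\le (E\|\cdot\|^4)^{1/4}$ and the $L^4$ approximability of Assumptions \ref{as-3}--\ref{as-4}. The tail contribution and the boundary factor together account for the $O(1/n)$ error.

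The variance identity \eqref{var-1} is the main obstacle. Starting from
\sa
\mbox{var}(\bar{T}_{n,H,m}) = \sum_{h,h'=-H}^H \mbox{cov}(\bar{\xi}_{n,h,m},\bar{\xi}_{n,h',m}),
\se
I would expand each covariance; independence of $\X$ and $\Y$ splits the eighth--order moment underlying $\bar{C}^2\bar{C}^2$ into a product of a fourth--order $X$--moment and a fourth--order $Y$--moment. A standard cumulant decomposition writes each such fourth moment as a sum of products of second moments (the disconnected part) plus a joint cumulant (the connected part). The fully disconnected contributions cancel against $E\bar{\xi}_{n,h,m}E\bar{\xi}_{n,h',m}$, so every surviving term involves at least one connected pairing. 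The $m$--dependence confines the summation indices in such a term to lie within $O(m)$ of one another, and after routine changes of variables the leading piece of $n^2\mbox{cov}(\bar{\xi}_{n,h,m},\bar{\xi}_{n,h',m})$ equals
\sa
2\int\!\!\!\cdots\!\!\!\int \Bigl(\sum_{\ell=-m}^m E[X_{0,m}(t)X_{\ell,m}(s)]\,E[Y_{0,m}(u)Y_{\ell+h'-h,m}(v)]\Bigr)^2 dtdsdudv
\se
plus an $o(1)$ remainder. Summing over $h,h'$ and grouping by $h'-h$ produces the stated limit $\bar{\sigma}^2(m)$; it is here that the bookkeeping of the several fourth--order cumulant contributions and of the boundary terms in the index sums is most delicate.

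Finally, for \eqref{var-2}, the inequality $|\alpha_m(\ell)-\gamma_\X(\ell)|\le 2(E\|X_0\|^2)^{1/2}(E\|X_0-X_{0,m}\|^2)^{1/2}$ on $|\ell|\le m$, combined with $\alpha_m(\ell)=0$ and $|\gamma_\X(\ell)|\le (E\|X_0\|^2)^{1/2}(E\|X_0-X_{0,\ell}\|^2)^{1/2}$ for $|\ell|>m$ (together with the analogous bounds for $\beta_m,\gamma_\Y$), implies that each squared integrand in $\bar{\sigma}^2_h(m)$ converges pointwise to its counterpart in $\sigma^2_h$ as $m\to\infty$. Dominated convergence on the outer sum over $h$, controlled by the $\ell^1$ majorants supplied by Assumptions \ref{as-3} and \ref{as-4}, then delivers \eqref{var-2}.
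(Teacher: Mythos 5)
Your plan follows the paper's proof in all essentials: the two mean formulas are obtained exactly as in the paper (expand, collapse to the lag variable with the $(1-|\ell|/n)$ weight, truncate by $m$--dependence or invoke the absolute convergence supplied by the bound $|\gamma_\X(\ell)|\le (E\|X_0\|^2)^{1/2}(E\|X_0-X_{0,\ell}\|^2)^{1/2}$), the variance is reduced by stationarity to the single sum $\sum_{h}\mathrm{cov}(\bar{\xi}_{n,0,m},\bar{\xi}_{n,h,m})$, the covariance is factorized by independence into fourth--order $X$-- and $Y$--moments, and \eqref{var-2} follows by the same termwise convergence and domination argument. The only genuine difference is organizational: where you invoke a cumulant decomposition of each fourth moment, the paper instead partitions the index set $(i,j,k,\ell)$ into regions ($R_1$ where $i,j\le k,\ell$, and $R_2$ with its four order--type subregions) and uses $m$--dependence to factor $E[X_{i,m}X_{j,m}X_{k,m}X_{\ell,m}]$ directly into products of second moments on the dominant regions; the two devices are equivalent, and yours is arguably cleaner bookkeeping. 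One sentence of yours needs repair, however: it is not true that ``every surviving term involves at least one connected pairing.'' Only the pairing $(i,j)(k,\ell)$ (for both the $X$ and the $Y$ factors simultaneously) cancels against $E\bar{\xi}_{n,h,m}E\bar{\xi}_{n,h',m}$; the leading contribution of order $n^2$ comes precisely from the two \emph{fully disconnected} cross--pairings $(i,k)(j,\ell)$ and $(i,\ell)(j,k)$ taken in both the $X$-- and $Y$--moments (this is the source of the factor $2$ in $\bar{\sigma}^2_h(m)$, and it leaves two free summation indices rather than confining all four to within $O(m)$ of each other). The genuinely connected (cumulant) terms and the mixed pairings are the negligible ones. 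Since your displayed leading term is nevertheless the correct one, this is a misstatement of the mechanism rather than a fatal flaw, but as written that step of the argument would, if taken literally, force the limit to be zero.
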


\begin{proof} First we note that by the stationarity of $X_{i,m}$ and $Y_{j,m}$ we have
\sa
\lim_{n\to \infty}\frac{E\bar{T}_{n,H,M}}{2H+1}&=nE\int\bar{C}_{n,0,m}^2(t,s)dtds\\
&=\frac{1}{n}\sum_{1\leq i,j\leq n}\intt E[X_{i,m}(t)X_{j,m}(t)]E[Y_{i+h,m}(s)Y_{j+h,m}(s)]dtds\\
&=\int EX_{0,m}(t)X_{0,m}(t)dt\int EY_{0,m}(t)Y_{0,m}(t)ds\\
&\hspace{.5 cm}+2\sum_{1\leq \ell \leq n-1}\left(1-\frac{\ell}{n}\right) \intt E[X_{0,m}(t)X_{\ell,m}(t)]E[Y_{0,m}(s)Y_{\ell,m}(s)]dtds\\
&=\bar{\mu}(m)+O(1/n),
\end{align*}
proving   \ref{mean-1}. Similarly,
\sa
nE\tilde{\xi}_{n,h}
&=\frac{1}{n}\sum_{1\leq i,j\leq n}\intt E[X_i(t)X_j(t)]E[Y_{i+h}(s)Y_{j+h}(s)]dtds\\
&=\int EX_0(t)X_0(t)dt\int EY_0(t)Y_0(t)ds\\
&\vspace{1 cm}+2\sum_{1\leq \ell \leq n-1}\left(1-\frac{\ell}{n}\right) \intt E[X_0(t)X_\ell(t)]E[Y_0(s)Y_\ell(s)]dtds,
\end{align*}
and therefore by Assumptions \ref{as-3} and \ref{as-4} we have \eqref{mean-2}. \\
To prove \eqref{var-1} we note that by stationarity we have
\sa
\mbox{\rm var}(\bar{T}_{n,H,m})&=\sum_{h,h'=-H}^H\mbox{\rm cov}(\bar{\xi}_{n,h,m}, \bar{\xi}_{n,h',m})\\
&=(2H+1)\sum_{h=-2H}^{2H}\mbox{\rm cov}(\bar{\xi}_{n,0,m}, \bar{\xi}_{n,h,m})-\sum_{h=-2H}^{2H}|h|\mbox{\rm cov}(\bar{\xi}_{n,0,m}, \bar{\xi}_{n,h,m})\\
&=(2H+1)\sum_{h=-2H}^{2H}\mbox{\rm cov}(\bar{\xi}_{n,0,m}, \bar{\xi}_{n,h,m})+O(1),
\end{align*}
since by the arguments used in Section \ref{sec-mom}
$$
\sum_{-2H\leq h \leq 2H}|h||\mbox{\rm cov}(\bar{\xi}_{n,0,m}, \bar{\xi}_{n,h,m})|=O(1).
$$
Next we show that
\begin{align}\label{var-new}
\max_{-2H\leq h \leq 2H}\left|\mbox{\rm cov}(\bar{\xi}_{n,0,m}, \bar{\xi}_{n,h,m})-\bar{\sigma}^2_h(m)\right|=O(H/n).
\end{align}
 Using the definition of $\bar{\xi}_{n,h,m}$ we have
 \begin{align*}
 \mbox{\rm cov}(\bar{\xi}_{n,0,m}, \bar{\xi}_{n,h,m})=\frac{1}{n^2}\sum_{1\leq i,j,k,\ell\leq n}{\chi}_{i,j,k,\ell}(h),
 \end{align*}
where ${\chi}_{i,j,k,\ell}(h)={\one}_{i,j,k,\ell}(h)-{\two}_{i,j,k,\ell}(h)$,
\sa
&\one_{i,j,k,\ell}(h)\\
&=\int\!\!\!\cdots\!\!\!\int E[X_{i,m}(t)X_{j,m}(t)X_{k,m}(u)X_{\ell,m}(u)]E[Y_{i,m}(s)Y_{j,m}(s)Y_{k+h,m}(v)Y_{\ell+h,m}(v)]dtdsdudv
\end{align*}
and
\sa
&\two_{i,j,k,\ell}(h)\\
&=\int\!\!\!\cdots\!\!\!\int E[X_{i,m}(t)X_{j,m}(t)]E[Y_{i,m}(s)Y_{j,m}(s)]E[X_{i,m}(u)X_{j,m}(u)]E[Y_{i,m}(v)Y_{j,m}(v)]dtdsdudv.
\end{align*}
Let $R_1=\{ (i,j,k,\ell): 1\leq i,j\leq k,\ell\leq n\}$  and $R_2=\{(i,j,k,\ell): 1\leq i,k\leq j,\ell\leq n\}.$  Using stationarity we get
$$
\sum_{1\leq i,j,k,\ell\leq n}\chi_{i,j,k,\ell}(h)=2\!\!\!\!\!\sum_{(i,j,k,\ell)\in R_1}\chi_{i,j,k,\ell}(h)+4\!\!\!\!\!\sum_{(i,j,k,\ell)\in R_2}\chi_{i,j,k,\ell}(h).
$$
By  the $m$--dependence of $X_{i,m}$ and $Y_{i,m}$ we obtain that
\beq\label{meanR1}
\max_{|h|\leq 2H}\frac{1}{n^2}\left|\sum_{(i,j,k,\ell)\in R_1}\chi_{i,j,k,\ell}(h)\right|=O\left(\frac{1}{n}\right).
\eeq
Also, it is easy to check that
\begin{align}\label{meanR2}
\max_{|h|\leq 2H}\frac{1}{n^2}\left|\sum_{(i,j,k,\ell)\in R_2}
\two_{i,j,k,\ell}(h)\right|
=O\left(\frac{1}{n}\right)
\end{align}
and therefore we need to study $\sum_{(i,j,k,\ell)\in R_2}\one_{i,j,k,\ell}(h)$ only. We decompose $R_2$ as $R_2=\cup_{i=1}^4R_{2,i}$, where
$$
R_{2,1}=\{ (i,j,k, \ell): 1\leq i \leq k \leq j\leq \ell\leq n\},\;\;
R_{2,2}=\{ (i,j,k, \ell): 1\leq i \leq k \leq \ell<j\leq n\}
$$
$$
R_{2,3}=\{ (i,j,k, \ell): 1\leq k<i \leq \ell <j\leq n\},\;\;
R_{2,4}=\{ (i,j,k, \ell): 1\leq k<i \leq j\leq \ell \leq n\}.
$$
Furthermore, let $R_{2,1}^{(1)}=\{(i,j,k,\ell): k\leq j \leq k+h+m\}\cap R_{2,1}$ and $R_{2,1}^{(2)}=\{(i,j,k,\ell): j>k+h+m\}\cap R_{2,1}$. Using again $m$--dependence we get that
$$
\frac{1}{n^2}\max_{| h|\leq 2H}\left|\sum_{(i,j,k,\ell)\in R_{2,1}^{(1)}}\one_{i,j,k,\ell}(h)\right|=O\left(\frac{H}{n}\right).
$$
Furthermore,
\sa
&\sum_{(i,j,k,\ell)\in R_{2,1}^{(2)}}\one_{i,j,k,\ell}(h)\\
&=\int\!\!\!\cdots\!\!\!\int \sum_{i=1}^n\sum_{k=i}^n\sum_{j=k+m+h}^n\sum_{\ell=j}^n E[X_{i,m}(t)X_{j,m}(t)X_{k,m}(u)X_{\ell,m}(u)]\\
&\hspace{1 cm}\times E[Y_{i,m}(s)Y_{j,m}(s)Y_{k+h,m}(v)Y_{\ell+h,m}(v)]dtdsdudv\\
&=\int\!\!\!\cdots\!\!\!\int \sum_{i=1}^n\sum_{k=i}^n\sum_{j=k+m+h}^n\sum_{\ell=j}^n E[X_{i,m}(t)X_{k,m}(u)]E[X_{j,m}(t)X_{\ell,m}(u)]\\
&\hspace{1 cm}\times E[Y_{i,m}(s)Y_{k+h,m}(v)]E[Y_{j,m}(s)Y_{\ell+h,m}(v)]dtdsdudv\\
&=\int\!\!\!\cdots\!\!\!\int\sum_{i=1}^n\sum_{r=0}^{n-i}\sum_{j=r+i+m+h}^n\sum_{p=0}^{n-j}E[X_{0,m}(t)X_{r,m}(u)]E[X_{0,m}(t)X_{p,m}(u)]\\
&\hspace{1 cm}\times E[Y_{0,m}(s)Y_{r+h,m}(v)]E[Y_{0,m}(s)Y_{p+h,m}(v)]dtdsdudv\\
&=U_{n,1}(h)+\int\!\!\!\cdots\!\!\!\int \sum_{i=1}^{n-m}\sum_{r=0}^m\sum_{j=i+m+h}^{n-m}\sum_{p=0}^mE[X_{0,m}(t)X_{r,m}(u)]E[X_{0,m}(t)X_{p,m}(u)]\\
&\hspace{1 cm}\times E[Y_{0,m}(s)Y_{r+h,m}(v)]E[Y_{0,m}(s)Y_{p+h,m}(v)]dtdsdudv
\end{align*}
and
$$
\max_{| h |\leq 2H}|U_{n,1}(h)|=O(n).
$$
Elementary algebra and $H/n\to 0$ give
$$
\sup_{|h |\leq 2H}\left|\sum_{i=1}^{n-m}\sum_{j=i+m+h}^{n-m}\!\!\! 1-\frac{n^2}{2}\right|=O(n)
$$
and therefore
\sa
&\int\!\!\!\cdots\!\!\!\int \sum_{i=1}^{n-m}\sum_{r=0}^m\sum_{j=i+m+h}^{n-m}\sum_{p=0}^mE[X_{0,m}(t)X_{r,m}(u)]E[X_{0,m}(t)X_{p,m}(u)]\\
&\hspace{1 cm}\times E[Y_{0,m}(s)Y_{r+h,m}(v)]E[Y_{0,m}(s)Y_{p+h,m}(v)]dtdsdudv\\
& =\frac{n^2}{4}\bar{\sigma}_{h,1}^2(m)+U_{n,2}(h),
\end{align*}
where
\sa
\bar{\sigma}_{h,1}^2(m)&=2\int\!\!\!\cdots\!\!\!\int \sum_{r=0}^m\sum_{p=0}^mE[X_{0,m}(t)X_{r,m}(u)]E[X_{0,m}(t)X_{p,m}(u)]\\
&\hspace{1 cm}\times E[Y_{0,m}(s)Y_{r+h,m}(v)]E[Y_{0,m}(s)Y_{p+h,m}(v)]dtdsdudv
\end{align*}
and
$$
\max_{| h| \leq 2H}|U_{n,2}(h)|=O(n).
$$
Thus we have
$$
\max_{|h| \leq 2H}\left|\frac{4}{n^2}\sum_{(i,j,k,\ell)\in R_{2,1}}\chi^{(1)}_{i,j,k,\ell}(h)-\bar{\sigma}_{h,1}^2(m)\right|=O\left(\frac{H}{n}\right).
$$
In a similar fashion it can be shown that
$$
\max_{|h| \leq 2H}\left|\frac{4}{n^2}\sum_{(i,j,k,\ell)\in R_{2,q}}\chi^{(1)}_{i,j,k,\ell}(h)-\bar{\sigma}_{h,q}^2(m)\right|=O\left(\frac{H}{n}\right),\;\;\;q=2,3,4,
$$
 where
 \sa
\bar{\sigma}_{h,2}^2(m)&=2\int\!\!\!\cdots\!\!\!\int \sum_{r=0}^m\sum_{p=-m}^{-1}E[X_{0,m}(t)X_{r,m}(u)]E[X_{0,m}(t)X_{p,m}(u)]\\
&\hspace{1 cm}\times E[Y_{0,m}(s)Y_{r+h,m}(v)]E[Y_{0,m}(s)Y_{p+h,m}(v)]dtdsdudv,
\end{align*}
 \sa
\bar{\sigma}_{h,3}^2(m)&=2\int\!\!\!\cdots\!\!\!\int \sum_{r=-m}^{-1}\sum_{p=-m}^{-1}E[X_{0,m}(t)X_{r,m}(u)]E[X_{0,m}(t)X_{p,m}(u)]\\
&\hspace{1 cm}\times E[Y_{0,m}(s)Y_{r+h,m}(v)]E[Y_{0,m}(s)Y_{p+h,m}(v)]dtdsdudv,
\end{align*}
 \sa
\bar{\sigma}_{h,4}^2(m)&=2\int\!\!\!\cdots\!\!\!\int \sum_{r=-m}^{-1}\sum_{p=0}^{m}E[X_{0,m}(t)X_{r,m}(u)]E[X_{0,m}(t)X_{p,m}(u)]\\
&\hspace{1 cm}\times E[Y_{0,m}(s)Y_{r+h,m}(v)]E[Y_{0,m}(s)Y_{p+h,m}(v)]dtdsdudv.
\end{align*}
Since $\bar{\sigma}^2_h(m)=\bar{\sigma}_{1,4}^2(m)+\cdots +\bar{\sigma}_{h,4}^2(m)$, the proof of \eqref{var-new} is complete.\\
Observing that  $\bar{\sigma}^2_h(m)=0$, if $|h|>2m+1$, and $H/n\to 0$, \eqref{var-1} is established.
\\
The series in the definitions of  $\sigma^2$ are absolutely convergent and Assumptions \ref{as-3} and \ref{as-4}  imply that $\bar{\sigma}^2_h(m)\to \sigma_h^2$, as $m\to\infty$, proving \eqref{var-2}.
\end{proof}
\end{document}